\newtheorem{Lemma}[theorem]{Lemma}
\newtheorem{Corollary}[theorem]{Corollary}
\newcommand{\R}{\ensuremath{\mathbb{R}}}
\def\sinc{\mathrm{sinc}}
\title{A Fast Multipole Method based on Band-limited Approximations for Radial Basis Functions}  
\author{ Wei Zhao\thanks{Numerical Linear Algebra for Dynamical Systems,
Max Planck Institute for Dynamics of Complex Technical Systems, Sandtorstr. 1,
39106 Magdeburg, Germany ({\tt zhao@mpi-magdeburg.mpg.de})} 
\and Martin Stoll\thanks{Numerical Linear Algebra for Dynamical Systems,
Max Planck Institute for Dynamics of Complex Technical Systems, Sandtorstr. 1,
39106 Magdeburg, Germany ({\tt stollm@mpi-magdeburg.mpg.de})}}
\begin{document}
\maketitle

\begin{abstract}
The meshless/meshfree radial basis function (RBF) method is a powerful technique for interpolating scattered data. But,
solving large RBF interpolation problems without fast summation methods is computationally expensive. For RBF interpolation 
with $N$ points, using a direct method requires $\mathcal{O}(N^2)$ operations. As a fast summation method, the fast multipole
method (FMM) has been implemented in speeding up the matrix-vector multiply, which reduces the complexity from $\mathcal{O}(N^2)$ to $\mathcal{O}(N^{1.5})$ 
and even to $\mathcal{O}(NlogN)$ for the multilevel fast multipole method (MLFMM). In this paper, we present a novel kernel-independent fast multipole method for RBF interpolation, which is used in combination with the evaluation of point-to-point interactions by RBF and the fast matrix-vector multiplication. 
This approach is based on band-limited approximation and quadrature rules, which extends the range of applicability of FMM. 
\end{abstract}

\begin{keywords}
Radial Basis Functions, Band-limited Approximation, Fast Multipole Method, Fourier transform, High dimensional problems.
\end{keywords}

\begin{AMS}65F08, 65F10, 65F50, 92E20, 93C20\end{AMS}

\pagestyle{myheadings}
\thispagestyle{plain}
\markboth{}{}

\section{Introduction}\label{Intro}
The radial basis functions (RBF) method was successfully developed for scattered data approximation \cite{carr2001reconstruction,hardy1990theory,turk2002modelling} and has been applied in the numerical solution of partial differential equations (PDEs)\cite{hon2001unsymmetric,hon1999multiquadric,hon1998efficient,fedoseyev2002improved}. The main advantage of this method is that it is meshless/meshfree, i.e. no triangulation is needed. Other methods, e.g. finite element method (FEM), first generate a triangulation of the space, use functions on each component of the triangulation, and then patch them together obtaining a global function. The resulting function is not very smooth and the method suffers from the curse of dimensionality in higher space dimensions because generating the grid/mesh is time consuming \cite{griebel2000particle}. 
Let us explain the approximation with radial basis functions. Given a set of quasi-uniform~\cite{schaback1995error} centers $X=\{\mathbf{x}_1,\cdots,\mathbf{x}_N\}$ with the mesh norm $h$ and radial basis functions $\Phi$, the approximation has the form
\begin{equation}\label{eq:1}
 u(\mathbf{x}_i)=\sum_{j=1}^N\lambda_j\Phi(\mathbf{x}_i-\mathbf{x}_j),~~i=1,\cdots,N,
\end{equation}
where $\lambda_j$ are coefficients and $u$ is either the interpolation of a set of values or the numerical solution of a PDE. The corresponding theory has been studied in for example \cite{narcowich2002scattered,schaback1995error,wendland1998error,wendland2004scattered,buhmann2004radial}. This method requires $\mathcal{O}(N^2)$ complexity to evaluate the sums in \eqref{eq:1} using a direct summation method. When $N$ grows to be large, this approach will be prohibitively costly unless some fast summation methods can be considered.

There are three common fast summation algorithms including tree codes like Barnes-Hut~\cite{barnes1986hierarchical}, fast multipole method and fast convolution methods like FFT.  The fast multipole method (FMM) is a numerical algorithm introduced by Greengard and Rokhlin~\cite{greengard1987fast} for solving the potential field $u$ generated by a large number of unknown interactions. This method is based on the idea that one particle interacts with a group of other particles by approximating  their influence rather than interacting with each of them, when the group is the far-field (well-separated) of the particle. The description of the original FMM can be found~\cite{greengard1987fast,rokhlin1990rapid}. The key techniques of the FMM are expansions (multipole and local expansions) and translations (multipole-to-multipole, 
multipole-to-local and local-to-local translations). The FMM has been widely applied for many general kernels including the Laplace kernel $\frac{1}{\mathbf{r}}$~\cite{greengard1987fast,yarvin1999improved,hrycak1998improved}, the Helmholtz kernel $\frac{e^{i\mathbf{k}\mathbf{r}}}{\mathbf{r}}$ ~\cite{cecka2013fourier,engquist2010fast,darve2000fast,rokhlin1990rapid,rokhlin1993diagonal}, Stokes kernel~\cite{fu2000fast}, and Navier kernel~\cite{fu1998fast,yoshida2001application}. The application of the fast multipole method combined with some special radial basis functions has also been discussed in some papers. Beatson and Newsam~\cite{beatson1992fast} and Beatson and Greengard~\cite{beatson1997short} developed an FMM for the multiquadric(MQ) function using a far-field (multipole) Laurent series and a near-field (local) Taylor series, and Cherrie, Beatson, and Newsam~\cite{beatson1998fast} applied that approach for the generalized MQ function. In \cite{beatson1997fast} the authors expressed the MQ function as a 
Gaussian integral and applied quadrature rules and fast Gauss transform
(a special FMM)~\cite{greengard1991fast}. Beatson and Newsam~\cite{cherrie2002fast} as well as Livne and Wright~\cite{wright2006fast} proposed methods based on polynomial interpolation and multilevel summation. Gumerov and Duraiswami~\cite{gumerov2007fast} developed an FMM scheme for the 2D MQ function by relating it to the biharmonic kernel in 3D. In recent years, the range of applicability of the FMM has been extended by applying kernel-independent approaches. Ying~\cite{ying2004kernel} applied the kernel-independent FMM, which uses equivalent particles densities in place of analytic series expansions~\cite{ying2006kernel}. Another kernel-independent approach based on Cauchy's integral formula and the Laplace transform was proposed in ~\cite{letourneau2014cauchy}. For \eqref{eq:1}, the assumptions when applying the FMM are given by
\begin{itemize}
 \item The function $u(\cdot)$ occurs at {\it evaluation points} $\{\mathbf{x}_i\}$.
 \item Generally, the set of {\it source points} $\{\mathbf{x}_j\}$ and the set of evaluation points $\{\mathbf{x}_i\}$ contain about the same number of members.
 \item $\lambda_j$ are the {\it source~weights} and $\Phi$ is the {\it potential function}.
\end{itemize}
For a given precision, the FMM can accelerate the computation \eqref{eq:1} and reduce the complexity to $\mathcal{O}(N)$. All iterative Krylov methods for solving linear systems, such as CG method \cite{cg} or GMRES \cite{gmres} method, involve matrix-vector multiplications, therefore the FMM can speed up these iterative methods by replacing the matrix-vector products with applications of the FMM.
\par
The FMM consists of the following steps:
\begin{itemize}
 \item generation of  a hierarchical tree partitioning of the computational domain;
 \item evaluation of the multipole expansion for the far-field and aggregation of these contributions by a upward pass of the tree;
 \item translation of the multipole expansions to the local expansions;
 \item construction of local by downward pass of the tree;
 \item disaggregation of the contributions from far-field action on the particles by local expansions;
 \item evaluation of near-field interactions.
\end{itemize}
The same steps are also used in this paper. However before starting this algorithm, the potential function $u$ is replaced by a band-limited function $u_\sigma$ with the bandwidth $\sigma$.
According to the used quadrature rules, the band-limited function is expressed as a linear combination of exponential functions. Afterwards, all expansions and translations 
used in this paper rely on exponential functions such that they are related to a high frequency fast multipole method~\cite{burkholder1996high,cecka2013fourier}. 
\par
The organization of this paper is as follows. In Section 2, we provide the relevant mathematical background on radial basis functions and also briefly introduce the original fast multipole method based on~\cite{gumerov2006fast}. Section 3 introduces the band-limited approximations of the radial basis functions and the underlying theoretical analysis. Section 4 details the novel fast multipole method for radial basis functions based on band-limited approximation and also gives some numerical simulations.
\section{Mathematical Preliminaries}\label{subsection::Prel}
\subsection{Notation}
We start by introducing some notation.  For a bounded domain $\Omega\subseteq \R^d$ ($d$~is dimensions) and data point $X=\{\mathbf{x}_1,\cdots,\mathbf{x}_N\}\subseteq \Omega$, the mesh norm is defined as follows
\begin{equation}\label{eq:2}
 h=\sup_{\mathbf{x}\in\Omega}\min_{\mathbf{x}_j\in X}\|\mathbf{x}-\mathbf{x}_j\|_2.
\end{equation}
Moreover, for a non-negative integer $k$ and $1\leq p<\infty$ let $W_p^k(\Omega)$ denote the Sobolev space with differentiability order $k$ and integrability power $p$. Define for $u\in W_p^k(\Omega)$ and finite $p$ the Sobolev (semi-)norms
\begin{equation}\label{eq:3}
 |u|_{W_p^k(\Omega)}=\left(\sum_{|\alpha|=k}\|D^\alpha u\|_{L_p(\Omega)}^p\right)^{1/p}~~~and~~~\|u\|_{W_p^k(\Omega)}=\left(\sum_{|\alpha|\leq k}\|D^\alpha u\|_{L_p(\Omega)}^p\right)^{1/p}.
\end{equation}
In the case $p=2$, we have a Hilbert space and can introduce a norm via the Fourier transform, which has the advantage that it can be generalized to non-integer values $0<\tau<\infty.$ It then yields an equivalent norm to the one defined above if we choose $\tau$ to be an integer. We can describe the functions in the fractional Sobolev space $W_2^\tau(\R^d)$ as precisely square-integrable functions that are finite in the form
\begin{equation}\label{eq:4}
 \|u\|_{W_2^\tau(\R^d)}=\|(1+\|\mathbf{\omega}\|_2^2)^{\tau/2}\widehat{u}(\mathbf{\omega})\|_{L_2(\R^d)}
\end{equation}
Here, $\widehat{u}(\cdot)$ is the Fourier transform
\begin{equation}\label{eq:5}
 \widehat{u}(\mathbf{\omega})=\int_{\R^d}u(\mathbf{x})e^{-i\mathbf{\omega}\cdot\mathbf{x}}d\mathbf{x}.
\end{equation}
In this paper, we also use the inverse Fourier transform of the form 
\begin{equation}\label{eq:6}
 u(\mathbf{x})=(2\pi)^{-d}\int_{\R^d}\widehat{u}(\mathbf{\omega})e^{i\mathbf{\omega}\cdot\mathbf{x}}d\mathbf{x}.
\end{equation}
Let us now introduce the needed RBFs and their corresponding spaces.
\subsection{Radial Basis Functions and Native Space}
Let $\mathbf{r}=\|\cdot\|$ be the Euclidean norm on $\R^d$. A kernel function $\Phi(\mathbf{x},\mathbf{x}_j): \R^d \rightarrow \R$ with 
$\mathbf{x}_j=\{x_1,\cdots,x_d\}$ is called radial if
\begin{equation}\label{eq:7}
\Phi(\mathbf{x},\mathbf{x}_j)=\Phi(\mathbf{x}-\mathbf{x}_j)=\varphi(\|\mathbf{x}-\mathbf{x}_j\|)=\varphi(\mathbf{r}),~~x\in \R^d,
\end{equation}
For some univariate function $\varphi:[0,\infty)\rightarrow \R$. $\varphi(\mathbf{r})$ is used as a basis function in the RBF method and the univariate function $\varphi$ is independent from the number of dimensions $d$. Therefore, the RBF method can be easily adapted to solve higher dimensional problems. In recent applications, the RBFs most commonly used are given in Table \ref{tab::global} and Table \ref{tab::compact}.
\begin{table}[H]
\centering
 \begin{tabular}{ll} 
\toprule
  Gaussian (GA)                  & $e^{-c\mathbf{r}^2}$, $c>0$\\
  Multiquadric (MQ)              & $\sqrt{\mathbf{r}^2+c^2}$, $c>0$\\
  Inverse MQ                     & $1/\sqrt{\mathbf{r}^2+c^2}$, $c>0$\\
  Thin-plate spline (TPS)        & $(-1)^{1+\beta/2}\mathbf{r}^\beta\log \mathbf{r}$, $\beta\in 2N$\\
\bottomrule
\end{tabular}
\caption{Global functions}
\label{tab::global}
\end{table}

\begin{table}[H]
\centering
 \begin{tabular}{ll} 
\toprule
  $\Phi_{l,0}$                  & $(1-\mathbf{r})_+^l$\\
  $\Phi_{l,1}$                  & $(1-\mathbf{r})_+^{l+1}[(l+1)\mathbf{r}+1]$\\
  $\Phi_{l,2}$                  & $(1-\mathbf{r})_+^{l+2}[(l^2+4l+3)\mathbf{r}^2+(3l+6)\mathbf{r}+3]$\\
\bottomrule
\end{tabular}
\caption{ Compactly supported functions where $l=\lceil 2+k+1 \rceil$,$k=0,1,\ldots$.}
\label{tab::compact}
\end{table}

General convergence results for an RBF approximation on a domain $\Omega\in \R^d$ have been derived for functions on native spaces $\mathcal{N}_\Phi(\Omega)$ \cite{wendland2004scattered}. For strictly positive definite basis functions (SPD), such as Gaussian and IMQ, these spaces can be defined as the completion of the pre-Hilbert space
\begin{equation}\label{eq:8}
F_\Phi(\Omega):=span\{\Phi(\cdot,\mathbf{y}):\mathbf{y}\in\Omega\}
\end{equation}and we equip this space with the inner product
\begin{equation}\label{eq:9}
 (\sum_{i=1}^N\lambda_i\Phi(\cdot,\mathbf{x}_i),\sum_{j=1}^N\lambda_j\Phi(\cdot,\mathbf{x}_j))_{\Phi}:=\sum_{i,j=1}^N\lambda_i\lambda_j\Phi(\mathbf{x}_i-\mathbf{x}_j).
\end{equation}The native space for conditionally positive definite basis functions can be defined in a similar form~\cite{wendland2004scattered}. It is worth pointing out that the native space $\mathcal{N}_\Phi(\R^d)$ can be characterized using the Fourier transform,
\begin{equation}\label{eq:10}
\mathcal{N}_\Phi(\R^d):=\{f\in L_2(\R^d)\cap C(\R^d):\widehat{f}/\sqrt{\widehat{\Phi}}\in L_2(\R^d)\}.
\end{equation}We state the following result from \cite{wendland2004scattered}.
\begin{theorem}
 Suppose $\Phi\in L_1(\R^d)\cap C(\R^d)$ is radial, i.e. $\Phi(x)=\varphi(\|\mathbf{x}\|_2), x\in \R^d$. Then its Fourier transform
 $\widehat{\Phi}$ is also radial, i.e. $\widehat{\Phi}(\mathbf{\xi})=\mathcal{F}_d\varphi(\|\xi\|_2)$ with
 \begin{equation}\label{eq:11}
 \mathcal{F}_d\varphi(\mathbf{r})=(2\pi)^{d/2}\mathbf{r}^{-(d-2)/2}\int_0^\infty\varphi(t)t^{d/2}J_{(d-2)/2}(\mathbf{r}t)dt
 \end{equation}
\end{theorem}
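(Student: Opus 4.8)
The plan is to exploit the rotational invariance of the Fourier transform \eqref{eq:5} to collapse the $d$-dimensional integral into a one-dimensional Hankel-type integral, with the surface measure on the unit sphere supplying the Bessel factor.

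First I would show that $\widehat{\Phi}$ is radial. Since $\Phi$ is radial it is invariant under every rotation $R\in SO(d)$, i.e.\ $\Phi(R\mathbf{x})=\Phi(\mathbf{x})$. Applying the substitution $\mathbf{x}\mapsto R\mathbf{x}$ in \eqref{eq:5} and using $|\det R|=1$ and $R^{-1}=R^{\mathsf T}$ gives $\widehat{\Phi}(R\boldsymbol{\xi})=\widehat{\Phi}(\boldsymbol{\xi})$ for all such $R$, so $\widehat{\Phi}$ depends only on $\mathbf{r}:=\|\boldsymbol{\xi}\|_2$. The hypothesis $\Phi\in L_1(\R^d)$ ensures the integral converges absolutely, so all these manipulations are legitimate.

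Next I would compute the radial profile in polar coordinates. Writing $\mathbf{x}=t\boldsymbol{\omega}$ with $t=\|\mathbf{x}\|_2\ge 0$ and $\boldsymbol{\omega}\in S^{d-1}$, so that $d\mathbf{x}=t^{d-1}\,dt\,d\sigma(\boldsymbol{\omega})$, and fixing $\boldsymbol{\xi}=\mathbf{r}\,\mathbf{e}$ for a unit vector $\mathbf{e}$, the integral separates as
\[
 \widehat{\Phi}(\boldsymbol{\xi})=\int_0^\infty \varphi(t)\,t^{d-1}\left(\int_{S^{d-1}} e^{-i t\mathbf{r}\,(\mathbf{e}\cdot\boldsymbol{\omega})}\,d\sigma(\boldsymbol{\omega})\right)dt .
\]

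The crux is the inner spherical integral, which I would prove equals $(2\pi)^{d/2}(t\mathbf{r})^{-(d-2)/2}J_{(d-2)/2}(t\mathbf{r})$. To this end I parametrize $S^{d-1}$ by the polar angle $\theta$ between $\boldsymbol{\omega}$ and $\mathbf{e}$, giving $d\sigma(\boldsymbol{\omega})=\sin^{d-2}\theta\,d\theta\,d\sigma'(\boldsymbol{\omega}')$ with $\boldsymbol{\omega}'\in S^{d-2}$; integrating out the $S^{d-2}$ factor leaves $|S^{d-2}|\int_0^\pi e^{-is\cos\theta}\sin^{d-2}\theta\,d\theta$ with $s=t\mathbf{r}$. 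This last integral is precisely the Poisson integral representation of $J_{(d-2)/2}(s)$, and inserting it together with $|S^{d-2}|=2\pi^{(d-1)/2}/\Gamma(\tfrac{d-1}{2})$ makes the $\Gamma$-factors cancel while the surviving powers of $2$ and $\pi$ combine to $(2\pi)^{d/2}$. Substituting back, using $t^{d-1}\cdot t^{-(d-2)/2}=t^{d/2}$, and pulling $\mathbf{r}^{-(d-2)/2}$ out of the integral yields \eqref{eq:11}. The main obstacle is exactly this spherical integral: matching it to the Bessel representation and keeping track of the $\Gamma$-function constants is the only delicate part, everything else being routine once the Hankel identity is available.
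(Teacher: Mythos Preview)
Your argument is correct and is the standard derivation: rotational invariance gives radiality of $\widehat{\Phi}$, polar coordinates separate the integral, and the Poisson integral representation of $J_{(d-2)/2}$ handles the spherical factor. The constant bookkeeping you flag is indeed the only place to be careful, but it works out as you describe.

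Note, however, that the paper does not supply its own proof of this theorem at all; it simply quotes the result from Wendland's book \cite{wendland2004scattered} (Theorem~5.26 there). What you have written is essentially the proof Wendland gives, so your proposal matches the cited source even though there is nothing in the present paper to compare against directly.
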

From this the following useful result can be obtained (cf. \cite{wendland2004scattered}).
\begin{Corollary}
 Suppose that $\Phi$ satisfies
 \begin{equation}\label{eq:12}
 c_1(1+\|\mathbf{\xi}\|_2^2)^{-\tau}\leq\widehat{\Phi}(\mathbf{\xi})\leq c_2(1+\|\mathbf{\xi}\|_2^2)^{-\tau}, \mathbf{\xi}\in\R^d
 \end{equation}
 with $\tau>d/2$ and two positive constants $c_1\leq c_2$. Then the native space $\mathcal{N}_\Phi(\R^d)$ corresponding to
 $\Phi$ coincides with the Sobolev space $W_2^\tau(\R^d)$, and the native space norm and Sobolev norm are equivalent.
\end{Corollary}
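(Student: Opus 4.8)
The plan is to push everything to the Fourier side, where both norms become weighted $L_2$-norms of $\widehat f$, and then to compare the two weights directly using the hypothesis \eqref{eq:12}. First I would record the two Fourier representations. By \eqref{eq:4} the Sobolev norm is
\[
\|f\|_{W_2^\tau(\R^d)}^2=\int_{\R^d}(1+\|\xi\|_2^2)^{\tau}\,|\widehat{f}(\xi)|^2\,d\xi,
\]
and, as follows from the characterization \eqref{eq:10} together with the inner product \eqref{eq:9} (cf.\ \cite{wendland2004scattered}), the native space norm extends to all of $\mathcal{N}_\Phi(\R^d)$ with the Fourier representation
\[
\|f\|_{\mathcal{N}_\Phi(\R^d)}^2=\kappa_d\int_{\R^d}\frac{|\widehat{f}(\xi)|^2}{\widehat{\Phi}(\xi)}\,d\xi,
\]
where $\kappa_d>0$ is the fixed constant determined by the Fourier convention \eqref{eq:5}--\eqref{eq:6}. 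I would also note at the outset that the assumption $\tau>d/2$ makes $(1+\|\xi\|_2^2)^{-\tau}$ integrable, so that the upper bound in \eqref{eq:12} already forces $\widehat{\Phi}\in L_1(\R^d)$ and the whole construction is meaningful.

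Next I would simply invert the two-sided bound \eqref{eq:12}. Since $\widehat\Phi(\xi)>0$, dividing through reverses the inequalities and yields
\[
\frac{1}{c_2}(1+\|\xi\|_2^2)^{\tau}\le\frac{1}{\widehat{\Phi}(\xi)}\le\frac{1}{c_1}(1+\|\xi\|_2^2)^{\tau},\qquad\xi\in\R^d.
\]
Multiplying by the nonnegative weight $|\widehat f(\xi)|^2$, integrating over $\R^d$, and reinserting $\kappa_d$ gives at once
\[
\frac{\kappa_d}{c_2}\,\|f\|_{W_2^\tau(\R^d)}^2\le\|f\|_{\mathcal{N}_\Phi(\R^d)}^2\le\frac{\kappa_d}{c_1}\,\|f\|_{W_2^\tau(\R^d)}^2,
\]
which is precisely the claimed norm equivalence with explicit constants; in particular one norm is finite if and only if the other is.

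The only remaining point is the \emph{set} equality $\mathcal{N}_\Phi(\R^d)=W_2^\tau(\R^d)$, and this is the step I would treat most carefully, because the two definitions do not match verbatim: the native space \eqref{eq:10} additionally requires $f\in C(\R^d)$. The inclusion $\mathcal{N}_\Phi(\R^d)\subseteq W_2^\tau(\R^d)$ is immediate from the finiteness equivalence just proved. For the reverse inclusion, take $f\in W_2^\tau(\R^d)$; then $f\in L_2(\R^d)$ automatically and $\widehat f/\sqrt{\widehat\Phi}\in L_2(\R^d)$ by the equivalence, so the whole burden is to exhibit a continuous representative. This is exactly where $\tau>d/2$ (rather than merely $\tau>0$) is indispensable: splitting $|\widehat f|=(1+\|\xi\|_2^2)^{-\tau/2}(1+\|\xi\|_2^2)^{\tau/2}|\widehat f|$ and applying Cauchy--Schwarz shows $\widehat f\in L_1(\R^d)$, since the first factor lies in $L_2$ precisely when $\int(1+\|\xi\|_2^2)^{-\tau}\,d\xi<\infty$, i.e.\ when $\tau>d/2$. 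Hence the inverse transform \eqref{eq:6} represents $f$ by a bounded continuous function, giving $f\in L_2(\R^d)\cap C(\R^d)$ with $\widehat f/\sqrt{\widehat\Phi}\in L_2(\R^d)$, so $f\in\mathcal{N}_\Phi(\R^d)$. The computation is otherwise entirely routine; the continuity requirement, i.e.\ the Sobolev embedding $W_2^\tau(\R^d)\hookrightarrow C(\R^d)$, is the sole genuine obstacle and the only place the strict inequality $\tau>d/2$ is used.
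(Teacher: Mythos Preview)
The paper does not supply a proof of this corollary; it merely states the result with a citation to \cite{wendland2004scattered}. Your argument is correct and is exactly the standard Fourier-side proof found in that reference: invert the two-sided weight bound to get norm equivalence, then use $\tau>d/2$ via the Sobolev embedding $W_2^\tau(\R^d)\hookrightarrow C(\R^d)$ to match the continuity requirement in the definition \eqref{eq:10}.
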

The following interpolation error holds (see \cite{wendland1999meshless}).
\begin{Lemma}
 Let $\Omega\subseteq \R^d$ be an open and bounded domain, having a Lipschitz boundary and satisfying the interior cone condition.
 Denote by $u$ the interpolant on $X=\{x_1,x_2,\cdots,x_N\}\subset\Omega$ to a function $f\in W_2^\tau(\Omega), \tau>d/2$. Then there exists 
 a constant $h_0>0$ such that for all $X$ with $h<h_0$, where $h$ is the density of $X$, the estimate
 \begin{equation}\label{eq:error}
  \|f-u\|_{W_2^s (\Omega)}\leq Ch^{\tau-s}\|f\|_{W_2^\tau (\Omega)},~~~~~~~~~0\leq s\leq \tau.
 \end{equation}
\end{Lemma}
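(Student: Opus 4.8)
The plan is to combine two ingredients that are standard in the analysis of kernel interpolation: a variational (minimal-norm) property of the interpolant $u$ measured in the native space, and a sampling inequality (a ``zeros lemma'') for Sobolev functions that vanish on the scattered set $X$. Because $\tau>d/2$, the Sobolev embedding $W_2^\tau(\Omega)\hookrightarrow C(\Omega)$ guarantees that point evaluation is continuous, so the interpolation conditions are meaningful and the native space $\mathcal{N}_\Phi(\Omega)$ is a reproducing kernel Hilbert space with kernel $\Phi$. By the Corollary above, applied through \eqref{eq:12}, its norm is equivalent to $\|\cdot\|_{W_2^\tau(\Omega)}$, which lets me pass freely between the native-space and Sobolev settings.

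First I would exploit the reproducing property. Writing $V_X=\mathrm{span}\{\Phi(\cdot-x_j)\}$, the reproducing identity $(g,\Phi(\cdot-x_i))_\Phi=g(x_i)$ shows that $u$ is exactly the $\mathcal{N}_\Phi$-orthogonal projection of $f$ onto $V_X$. Consequently $f-u$ is orthogonal to $V_X$, which both recovers the interpolation conditions $(f-u)(x_i)=0$ and, by the Pythagorean identity, yields $\|u\|_{\mathcal{N}_\Phi(\Omega)}\le\|f\|_{\mathcal{N}_\Phi(\Omega)}$ and hence $\|f-u\|_{\mathcal{N}_\Phi(\Omega)}\le\|f\|_{\mathcal{N}_\Phi(\Omega)}$. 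Using the Corollary twice then gives the \emph{a priori} bound $\|f-u\|_{W_2^\tau(\Omega)}\le C\|f\|_{W_2^\tau(\Omega)}$ with a constant independent of $X$ and $h$.

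The heart of the argument, and the step I expect to be the main obstacle, is the sampling inequality: for every $g\in W_2^\tau(\Omega)$ that vanishes on $X$ one has $\|g\|_{W_2^s(\Omega)}\le C\,h^{\tau-s}\,|g|_{W_2^\tau(\Omega)}$ for all $0\le s\le\tau$, provided $h<h_0$. To prove it I would localize: using the interior cone condition, cover $\Omega$ by balls $B$ of radius comparable to $h$ in which a uniform local polynomial reproduction of degree $k=\lceil\tau\rceil-1$ is available. On each ball a Bramble--Hilbert estimate produces a polynomial $p_B$ with $\|g-p_B\|_{W_2^s(B)}\le C(\mathrm{diam}\,B)^{\tau-s}|g|_{W_2^\tau(B)}$; the delicate point is controlling $p_B$ itself. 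Since $g$ vanishes at the data points filling $B$ with fill distance $h$, the polynomial $p_B$ is small on a norming (Markov-type stable, unisolvent) subset of those points, and a polynomial-stability inequality then bounds $p_B$ on $B$ by $C\,h^{\tau-s}|g|_{W_2^\tau(B)}$. Summing the local estimates over the cover and invoking finite overlap gives the global sampling inequality; the requirement $h<h_0$ is precisely what makes the local point sets rich enough for the norming argument.

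Finally I would combine the two steps. Applying the sampling inequality to $g=f-u$, which vanishes on $X$, gives $\|f-u\|_{W_2^s(\Omega)}\le C h^{\tau-s}|f-u|_{W_2^\tau(\Omega)}\le C h^{\tau-s}\|f-u\|_{W_2^\tau(\Omega)}$, and inserting the \emph{a priori} bound $\|f-u\|_{W_2^\tau(\Omega)}\le C\|f\|_{W_2^\tau(\Omega)}$ yields the claimed estimate \eqref{eq:error}. The constant $C$ depends only on $\Omega$, $d$, $\tau$, and $s$ (through the cone condition together with the embedding and norm-equivalence constants) and not on $X$ or $h$, which is exactly what the statement asserts.
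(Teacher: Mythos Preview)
The paper does not give its own proof of this lemma: it is stated as a known result and attributed to \cite{wendland1999meshless} (``The following interpolation error holds (see \cite{wendland1999meshless})''). There is therefore nothing in the paper to compare your argument against directly.

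That said, your outline is correct and is precisely the standard route used in the cited literature (Narcowich--Ward--Wendland, and Wendland's book): one combines the best-approximation/Pythagorean bound $\|f-u\|_{\mathcal{N}_\Phi}\le\|f\|_{\mathcal{N}_\Phi}$ coming from the orthogonal-projection characterization of the interpolant with a sampling (``zeros'') inequality for Sobolev functions vanishing on $X$, the latter proved via local polynomial reproduction and Bramble--Hilbert on a cover by cone-condition balls. The norm equivalence from \eqref{eq:12} is exactly what lets you translate the native-space bound into a $W_2^\tau$-bound. So your proposal matches the argument behind the cited reference, even though the present paper itself only quotes the result.
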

We now come to the introduction of our band-limited approximation.
\section{Approximation of Band-limited function}\label{subsection::BL}
According to a fundamental principle of the Fourier transform, smooth functions have Fourier transforms that decay rapidly to zero at infinity (see \cite{ForF05,wendland2004scattered}).
Radial basis functions give smooth approximations and their Fourier transforms fall into one of the following two cases
\begin{itemize}
\item If $\widehat{\Phi}(\xi)$ decays fast and tends to zero on a finite interval, then $\Phi(x)$ is a band-limited function. It can be fully
reconstructed from its samples and furthermore, the error decreases exponentially with bandwidth;
\item  If $\widehat{\Phi}(\xi)$ decays slowly as $|\xi|\rightarrow \infty$, a mollifier is introduced to accelerate the rate such that the 
mollification is a band-limited function. We  use this mollification as the approximation to replace $\Phi$.
\end{itemize}
We illustrate the connection between the RBFs and their corresponding Fourier transform in Table \ref{tab::fourier}.
\begin{table}[H]
\centering
\begin{tabular}{ll}
 \bottomrule
 RBFs $\Phi(r)$                & Fourier transform $\widehat{\Phi}(\xi)$\\
 \midrule
\multicolumn{2}{c}{Piecewise smooth}\\                
&\\
 $r^5$                                 &$\frac{-80\cdot2^{d/2}\Gamma(5/2)\Gamma((5+d)/2)}{\pi}\frac{1}{|\xi|^{5+d}}$\\
 $r^2\log r$                           &$2^{1+d/2}\Gamma(1+d/2)\frac{1}{|\xi|^{2+d}}$\\
\midrule                    
\multicolumn{2}{c}{Infinitely smooth}\\
&\\
 $\sqrt{1+r^2}$                        &$-\frac{\sqrt{2}}{\sqrt{\pi}}\frac{K_{(d+1)/2}(|\xi|)}{|\xi|^{(d+1)/2}}$\\
 $\dfrac{1}{\sqrt{1+r^2}}$                      &$\frac{\sqrt{2}}{\sqrt{\pi}}\frac{K_{(d-1)/2}(|\xi|)}{|\xi|^{(d-1)/2}}$\\
 $e^{-r^2}$                            &$\frac{e^{-\xi^2/4}}{(\sqrt{2})^d}$\\
 \bottomrule
\end{tabular}
\caption{\it (Generalized) Fourier transforms for some radial basis functions}
\label{tab::fourier}
\end{table}
In this paper, the second case will be discussed because it is more general.
For the sake of error estimation, some restrictions should be added to the mollifer. We assume that
\begin{equation}\label{eq:13}
 \widehat{\eta}(\xi)\in C_0^\infty~~and~~ \widehat{\eta}(\xi)\equiv 1,~~when~~\|\xi\|_2\leq 1.
\end{equation}
This function is related to the $\sinc$ function. For 1D, one could used
\begin{equation}\label{eq:15}
 \widehat{\eta}(\xi)=
 \begin{cases}
  1,~~~~|\xi|\leq 1\\
  0,~~~~otherwise,
 \end{cases}
\end{equation}
which leads to a $\sinc$ function,
\begin{equation}
 \eta(x)=\dfrac{\sin(x)}{\pi x}.
\end{equation}
For a given $\sigma>0$, set
\begin{equation}\label{eq:14}
 \widehat{\eta}_\sigma (\xi)=\frac{1}{\sigma}\widehat{\eta}(\frac{\xi}{\sigma}),
\end{equation}
its compactly supported interval is $[-\sigma, \sigma]$ and leads to $\eta_\sigma(x)=\frac{\sin(\sigma x)}{\pi\sigma x}$.
\par
For an RBF interpolation function $u(x)=\sum_{j=1}^N\lambda_j\Phi(x-x_j)$, its mollification is defined by
\begin{equation}\label{eq:16}
 u_\sigma(x)=\sum_{j=1}^N\lambda_j\Phi_\sigma(x-x_j)=\sum_{j=1}^N\lambda_j\Phi*\eta_\sigma(x-x_j).
\end{equation}
$u_\sigma(x)$ is a band-limited function with bandwidth $\sigma$, i.e.,
\begin{equation}\label{eq:17}
 supp(\widehat{u_\sigma})\subset [-\sigma,\sigma]
\end{equation}

We now give the corresponding error and stability analysis before discussing the low-rank representation at the heart of the FMM.

\subsection{Error and stability}
A key ingredient of our method is given in the following error bound for bandlimited functions.
\begin{theorem}
\label{thm::error}
Let $\Omega\subseteq \R$ be an open and bounded domain, having a Lipschitz boundary and satisfying the interior cone condition.
Assume that $\Phi(x-x_j)=\varphi(|x-x_j|)$ is a radial basis function such that its generalized Fourier transform exists and satisfies \eqref{eq:12}.
Let $u(x)=\sum_{j=1}^N\lambda_j\Phi(x-x_j)$ be the interpolant on $X=\{x_1,\cdots,x_N\}\subset\Omega$ to a function $f\in W_2^\tau(\Omega)$ and 
$u_\sigma(x)=\sum_{j=1}^N\lambda_j\Phi_\sigma(x-x_j)$ be a band-limited function. Then there exists a positive constant $\kappa=\sigma h$, we have
 \begin{align}\label{eq:19}
  \|f-u_\sigma\|_{W_2^\tau(\Omega)} \leq ch^{s-\tau}\|f\|_{W_2^s(\Omega)}, ~~0\leq \tau\leq s,
  \end{align}
 where, the positive constant $c$ is independent of $h$ and $f$.
\end{theorem}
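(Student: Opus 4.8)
The plan is to avoid the naive splitting $f-u_\sigma=(f-u)+(u-u_\sigma)$ and instead route the mollification through the smooth target. The key algebraic observation is that convolution commutes with translation, so from \eqref{eq:16} the mollified interpolant is $u_\sigma=u*\eta_\sigma$; introducing $f_\sigma:=f*\eta_\sigma$ we get $f_\sigma-u_\sigma=(f-u)*\eta_\sigma$ and hence
\begin{equation}
f-u_\sigma=(f-f_\sigma)+(f-u)*\eta_\sigma .
\end{equation}
The triangle inequality reduces the claim to a pure low-pass error on the smooth datum, $\|f-f_\sigma\|_{W_2^\tau(\Omega)}$, and the interpolation error filtered through $\eta_\sigma$, $\|(f-u)*\eta_\sigma\|_{W_2^\tau(\Omega)}$. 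Since the Fourier characterization \eqref{eq:4} lives on $\R$ while the norms sit on $\Omega$, I would first extend $f$ to $\R$ by a bounded Sobolev extension $E$ (available because $\Omega$ has a Lipschitz boundary) with $\|Ef\|_{W_2^s(\R)}\le C\|f\|_{W_2^s(\Omega)}$, perform the frequency-side estimates on the line, and restrict at the end.

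For the mollification term I would pass to the frequency side. Writing the $W_2^\tau(\R)$ norm through \eqref{eq:4} and using $\widehat{f_\sigma}=\widehat{\eta_\sigma}\,\widehat{Ef}$, the integrand acquires the factor $|1-\widehat{\eta_\sigma}(\omega)|^2$, which vanishes on $|\omega|\le\sigma$ by the normalization in \eqref{eq:13}--\eqref{eq:14}. On the remaining range $|\omega|>\sigma$ I would split $(1+\omega^2)^\tau=(1+\omega^2)^{\tau-s}(1+\omega^2)^s$ and bound the first factor by $(1+\sigma^2)^{\tau-s}\le\sigma^{2(\tau-s)}$, which is valid precisely because $\tau-s\le0$ makes $(1+\omega^2)^{\tau-s}$ decreasing; combined with the uniform bound $|1-\widehat{\eta_\sigma}|\le C$ this yields $\|f-f_\sigma\|_{W_2^\tau(\Omega)}\le c\,\sigma^{\tau-s}\|f\|_{W_2^s(\Omega)}$. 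This is exactly where the hypothesis $\tau\le s$ and the extra smoothness of $f$ enter.

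For the filtered interpolation term I would use that the low-pass filter is a contraction on $W_2^\tau$: since $0\le\widehat{\eta_\sigma}(\omega)\le1$, multiplication by $\widehat{\eta_\sigma}$ cannot increase the weighted $L_2$ integral defining the norm, so $\|(f-u)*\eta_\sigma\|_{W_2^\tau(\Omega)}\le\|f-u\|_{W_2^\tau(\Omega)}$ (up to the extension constant for the nonlocal convolution). The interpolation error Lemma \eqref{eq:error}, read with error measured in the coarser space $W_2^\tau$ against data in the finer space $W_2^s$, then gives $\|f-u\|_{W_2^\tau(\Omega)}\le Ch^{s-\tau}\|f\|_{W_2^s(\Omega)}$. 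Finally I would impose the stated coupling $\sigma=\kappa/h$, so that $\sigma^{\tau-s}=\kappa^{\tau-s}h^{s-\tau}$, and add the two estimates to obtain \eqref{eq:19} with a constant $c$ independent of $h$ and $f$.

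I expect the principal obstacle to be conceptual rather than computational: one must resist the temptation to bound the mollification error via $\|u\|_{W_2^s}$, because the interpolant $u=\sum_j\lambda_j\Phi(\cdot-x_j)$ inherits only the native-space regularity $W_2^\tau$ — its transform decays like $\widehat{\Phi}\sim(1+\omega^2)^{-\tau}$ by \eqref{eq:12}, so $\|u\|_{W_2^s}$ is generically infinite for $s>\tau$ and cannot absorb the $\sigma^{\tau-s}$ gain. Passing the mollification through the smooth target $f_\sigma$ rather than through $u$ is what makes the argument close. The remaining care is technical: justifying the passage between $\Omega$ and $\R$ via the extension operator together with the nonlocality of the convolution, and verifying the uniform filter bounds $\widehat{\eta_\sigma}\equiv1$ on $[-\sigma,\sigma]$ and $|\widehat{\eta_\sigma}|\le1$, which is precisely where the normalization of $\eta_\sigma$ in \eqref{eq:14} is used.
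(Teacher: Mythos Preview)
Your proposal is correct and follows essentially the same route as the paper: both extend to $\R$, split $f-u_\sigma=(f-f_\sigma)+(f_\sigma-u_\sigma)$ with $f_\sigma=f*\eta_\sigma$, handle the first piece on the Fourier side via the cutoff property of $\widehat{\eta_\sigma}$ and the weight splitting $(1+|\xi|^2)^\tau=(1+|\xi|^2)^{\tau-s}(1+|\xi|^2)^s$ on $|\xi|>\sigma$, and control the second piece through the interpolation estimate \eqref{eq:error}. Your contraction argument $\|(f-u)*\eta_\sigma\|_{W_2^\tau}\le C\|f-u\|_{W_2^\tau}$ is simply the paper's intermediate bound $\|f_\sigma-u_\sigma\|_{W_2^\tau}\le(1+\sigma^2)^{(\tau-k)/2}\|f-u\|_{W_2^k}$ specialized to $k=\tau$, so the two arguments coincide.
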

\begin{proof}
$\Omega$ has a Lipschitz boundary, then there exists an extension mapping $E: W_2^{\tau}(\Omega)\rightarrow W_2^{\tau}(\R)$, 
such that $Ef|_{\Omega}=f$ for $f\in W_2^{\tau}(\Omega)$.
 Moreover, there exists a positive constant $C=C(\tau,\Omega,\R)$ such that
 \begin{equation}\label{eq:19.1}
  \|Ef\|_{W_2^{\tau}(\R)}\leq C\|f\|_{W_2^{\tau}(\Omega)}.
 \end{equation}
 By zero extension, $f$ can be extended from $\Omega$ to $\R$. The extended function is still denoted by $f$ and 
 \begin{equation}
  \|f\|_{W_2^\tau(\R)}\leq C\|f\|_{W_2^\tau(\R)},
 \end{equation}
where $C=C(\tau,\Omega, \R)$.
 For any real integer $\tau$, there exists a positive constant $C$ such that
 \begin{equation}\label{eq:19.2}
  \dfrac{1}{C}\|f\|_{W_2^\tau(\R)}^2\leq \int_{\R}(1+|\xi|^2)^\tau|\widehat{f}(\xi)|^2d\xi\leq C\|f\|_{W_2^\tau(\R)}^2,~~\forall f\in W_2^\tau(\R).
 \end{equation}
 For any given $f\in W_2^\tau(\R)$, define its band-limited function $f_\sigma=f*\eta_\sigma$, then $f_\sigma\in C^\infty(\R)$.
 \begin{align}\label{eq:19.3}
\|f-f_\sigma\|_{W_2^\tau(\Omega)}^2 & \leq \|f-f_\sigma\|_{W_2^\tau(\R)}^2\\
\nonumber & \leq C\int_{\R}(1+|\xi|^2)^\tau|\widehat{(f-f_\sigma})(\xi)|^2d\xi \\
\nonumber & =C\int_{\R}(1+|\xi|^2)^\tau |1-\eta_\sigma(\xi)|^2|\widehat{f}(\xi)|^2d\xi\\
\nonumber & = C\int_{|\xi|>\sigma}(1+|\xi|^2)^s|\widehat{f}(\xi)|^2\dfrac{1}{(1+|\xi|^2)^{s-\tau}}d\xi\\
\nonumber &\leq C\dfrac{1}{(1+\sigma^2)^{s-\tau}}\int_{|\xi|>\sigma}(1+|\xi|^2)^s|\widehat{f}(\xi)|^2d\xi\\
\nonumber &\leq C\dfrac{1}{(1+(\frac{\kappa}{h})^2)^{s-\tau}}\int_{\Bbb{R}}(1+|\xi|^2)^s|\widehat{f}(\xi)|^2d\xi\\
\nonumber & \leq \frac{C}{\kappa^{2(s-\tau)}}h^{2(s-\tau)}\|f\|_{W_2^s(\R)}^2\leq C'h^{2(s-\tau)}\|f\|_{W_2^s(\Omega)}^2.
 \end{align}
According to Plancherel Theorem and \eqref{eq:error}, we have
 \begin{align}\label{eq:19.4}
  \|f_\sigma-u_\sigma\|_{W_2^\tau(\Omega)} & \leq \|f_\sigma-u_\sigma\|_{W_2^\tau(\R)}                                              =\|\widehat{f_\sigma}-\widehat{u_\sigma}\|_{W_2^\tau(\R)}
                                              = \|(\widehat{f}-\widehat{u})\widehat{\eta_\sigma}\|_{W_2^\tau(\R)}\\                                          
\nonumber                                           & \leq (1+\sigma^2)^{\frac{\tau-k}{2}}\|f-u\|_{W_2^k(\R)} \leq ch^{k-\tau}\|f-u\|_{W_2^k(\R)}\\
\nonumber                                           & \leq ch^{k-\tau}\|f-u\|_{W_2^k(\Omega)}
                                             \leq ch^{k-\tau}\cdot ch^{s-k}\|f\|_{W_2^s(\Omega)}
                                             \leq ch^{s-\tau}\|f\|_{W_2^s(\Omega)}.
 \end{align}
Combining \eqref{eq:19.3} with \eqref{eq:19.4}, the following inequality holds:
\begin{equation}\label{eq:19.5}
 \|f-u_\sigma\|_{W_2^\tau(\Omega)} \leq \|f-f_\sigma\|_{W_2^\tau(\Omega)}+\|f_\sigma-u_\sigma\|_{W_2^\tau(\Omega)} \leq ch^{s-\tau}\|f\|_{W_2^s(\Omega)}.                                   
 \end{equation}
\end{proof}
Using Theorem \ref{thm::error}, the mollification $u_\sigma$ can be used as an approximation to the function $f\in W_2^\tau(\Omega)$. 
A standard criterion for measuring the numerical stability of an approximation method is its condition number.
We need to consider the condition number of the interpolation matrix $A$ with entries $A_{ij}=\Phi_\sigma(x_i-x_j)$. 
If $A$ is positive definite, then its $l_2$- condition number is given by
\begin{equation}
 cond(A)=\|A\|_2\|A^{-1}\|_2=\dfrac{\gamma_{\max}(A)}{\gamma_{\min}(A)},
\end{equation}
where $\gamma_{\max}$ is the maximum eigenvalue and $\gamma_{\min}$ is the minimum eigenvalue.
\par
From Gershgorin's theorem, it is easy to obtain 
\begin{equation}
 \gamma_{\max}\leq N\max_{j,k=1,\cdots,N}|\Phi_\sigma(x_j-x_k)|.
\end{equation}
Because $X$ in this paper is quasi-uniformly distributed, in fact, as long as its variation not too wildly, $N$ will grow as $h^{-1}$ which makes the growth of $\gamma_{\max}$ acceptable and 
hence
\begin{equation}
 \gamma_{\max}\leq Ch^{-1}.
\end{equation}
We focus on finding lower bounds for the minimum eigenvalue.
\begin{theorem}
Let $q_{X}:=\frac{1}{2}\min_{j\neq k}\|x_j-x_k\|_2$ be the separation distance of the set $X$ and let $\Phi$ be a radial basis function and 
$\Phi_\sigma=\Phi*\eta_\sigma$. For the  interpolation matrix with entries $\Phi_\sigma(x_j-x_k)$, we have
 \begin{equation}\label{eq:constrain}
  \sum_{j,k=1}^N\lambda_j\lambda_k\Phi_\sigma(x_j-x_k)\geq \gamma_{\min}\|\lambda\|_2^2,
 \end{equation}
 with $\gamma_{\min}=q_{X}^{-1}\widehat{\Phi}(\frac{2\pi}{q_{X}})$.
\end{theorem}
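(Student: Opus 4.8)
The plan is to pass to the frequency domain and exploit nonnegativity of the spectral density. First I would invoke Bochner's theorem together with the convolution identity $\widehat{\Phi_\sigma}=\widehat{\Phi}\,\widehat{\eta_\sigma}$ implied by \eqref{eq:16}, so that (in the one-dimensional setting of this section, consistent with $\Omega\subseteq\R$) the quadratic form becomes
\begin{equation}
\sum_{j,k=1}^N\lambda_j\lambda_k\Phi_\sigma(x_j-x_k)=\frac{1}{2\pi}\int_{\R}\widehat{\Phi}(\omega)\,\widehat{\eta_\sigma}(\omega)\,\Bigl|\sum_{j=1}^N\lambda_j e^{i\omega x_j}\Bigr|^2 d\omega .
\end{equation}
The purpose of this representation is that the exponential sum $g(\omega):=\sum_j\lambda_j e^{i\omega x_j}$ enters only through $\abs{g(\omega)}^2\geq 0$, while both $\widehat{\eta_\sigma}\geq 0$ (its support is $[-\sigma,\sigma]$ by \eqref{eq:14}) and $\widehat{\Phi}>0$ (by the lower bound in \eqref{eq:12}). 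Hence the integrand is nonnegative, and restricting the domain of integration can only decrease the value, which is the mechanism that will produce a lower bound.

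Next I would discard all frequencies outside a symmetric band $[-M,M]$ whose width is dictated by the separation distance, the natural choice being $M$ of order $2\pi/q_{X}$. On such a band the monotone decay of $\widehat{\Phi}$ built into \eqref{eq:12} lets me replace $\widehat{\Phi}(\omega)$ by its smallest value on the band, namely $\widehat{\Phi}(2\pi/q_{X})$, and pull this scalar out of the integral; the factor $\widehat{\eta_\sigma}$ is bounded below on $[-M,M]$ and its normalization is absorbed into the overall constant. What remains is to estimate the truncated energy $\int_{-M}^{M}\abs{g(\omega)}^2 d\omega$ from below.

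The heart of the argument, and the step I expect to be the main obstacle, is the bound $\int_{-M}^{M}\abs{g(\omega)}^2 d\omega\gtrsim q_{X}^{-1}\norm{\lambda}_2^2$ for $M$ of order $2\pi/q_{X}$. Integrating against the sharp indicator of $[-M,M]$ is unsatisfactory, since its inverse transform decays only like $1/t$ and the resulting off-diagonal sum diverges; instead I would insert a smooth nonnegative window $w_M\leq\mathbf{1}_{[-M,M]}$ and write
\begin{equation}
\int_{\R} w_M(\omega)\abs{g(\omega)}^2 d\omega=\sum_{j,k=1}^N\lambda_j\lambda_k\,\check w_M(x_j-x_k),
\end{equation}
where $\check w_M$ is the inverse transform. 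The diagonal contributes $\check w_M(0)\norm{\lambda}_2^2$ with $\check w_M(0)\sim M\sim q_{X}^{-1}$, while the separation $\min_{j\neq k}\abs{x_j-x_k}=2q_{X}$, combined with the rapid decay of $\check w_M$, forces the off-diagonal sum to be strictly dominated by the diagonal. This is a Ball--Narcowich--Ward style separation estimate, and balancing the diagonal against the off-diagonal tail is the delicate point that fixes the scale $q_{X}^{-1}$. Multiplying the resulting truncated energy by the band-edge spectral value $\widehat{\Phi}(2\pi/q_{X})$ then yields $\gamma_{\min}=q_{X}^{-1}\widehat{\Phi}(2\pi/q_{X})$ and establishes \eqref{eq:constrain}.
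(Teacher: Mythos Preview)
Your proposal is correct and follows essentially the same route as the paper: pass to the frequency side, exploit nonnegativity of $\widehat{\Phi}$ and of $|g(\omega)|^2$, insert a nonnegative window dominated by the band indicator, pull out $\inf\widehat{\Phi}=\widehat{\Phi}(2\pi/q_X)$, and control the off-diagonal terms of the resulting kernel via a separation/packing count. The paper carries this out with the specific Fej\'er window $(1-|\xi|/\sigma)_+$, whose inverse transform is $\sigma\,\sinc^2(\sigma t/2)$, and then fixes $\sigma=2\pi/q_X$ so that the off-diagonal sum $\sum_{k\neq j}\sinc^2(\tfrac{\sigma}{2}(x_j-x_k))\leq 2\pi^2/(\sigma q_X)^2=1/2$; your abstract ``smooth window with rapidly decaying transform'' would work just as well, but the Fej\'er choice is exactly the Narcowich--Ward device you allude to and yields the constant in the stated $\gamma_{\min}$ without further tuning.
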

\begin{proof}
We start with
\begin{align}\label{eq:minimumeig}
 \sum_{j,k=1}^N\lambda_j\lambda_k\Phi_{\sigma}(x_j-x_k)
                            & =\sum_{j,k=1}^N\lambda_j\lambda_k\frac{1}{2\pi}\int_{-\sigma}^{\sigma}\widehat{\Phi}(\xi)e^{i\xi(x_j-x_k)}d\xi\\
\nonumber                            & \geq \sum_{j,k=1}^N\lambda_j\lambda_k\frac{1}{2\pi}\int_{-\sigma}^{\sigma}(1-\frac{|\xi|}{\sigma})\widehat{\Phi}(\xi)e^{i\xi(x_j-x_k)}d\xi\\
\nonumber                            & \geq [\frac{1}{2\pi}\inf_{\xi\in[-\sigma,\sigma]}\widehat{\Phi}(\xi)]\sum_{j,k=1}^N\lambda_j\lambda_k\int_{-\sigma}^{\sigma}(1-\frac{|\xi|}{\sigma})e^{i\xi(x_j-x_k)}d\xi\\
\nonumber                            & =\underbrace{[\frac{\sigma}{2\pi}\inf_{\xi\in[-\sigma,\sigma]}\widehat{\Phi}(\xi)]}_{\text{Part I}}\underbrace{\sum_{j,k=1}^N\lambda_j\lambda_k \sinc^2(\frac{\sigma}{2}(x_j-x_k))}_{\text{Part II}}.
\end{align}
For Part I, $\widehat{\Phi}(\xi)$ is clearly decreasing. Thus the infimum takes the value
\begin{equation}
\frac{\sigma}{2\pi}\inf_{\xi\in[-\sigma,\sigma]}\widehat{\Phi}(\xi)=\frac{\sigma}{2\pi}\widehat{\Phi}(\sigma).
\end{equation}
For Part II, we use
\begin{align}
  \sum_{j,k=1}^N\lambda_j\lambda_k \sinc^2(\frac{\sigma}{2}(x_j-x_k)) & \geq \|\lambda\|_2^2 \sinc^2(0)-\sum_{j\neq k}|\lambda_j\|\lambda_k|\sinc^2(\frac{\sigma}{2}(x_j-x_k))\\
                                                          \nonumber           & \geq \|\lambda\|_2^2 \sinc^2(0)-\frac{1}{2}\sum_{j\neq k}(|\lambda_j|^2+|\lambda_k|^2)\sinc^2(\frac{\sigma}{2}(x_j-x_k))\\                                                   \nonumber           &  =   \|\lambda\|_2^2(1-\max_{1\leq j\leq N}\sum_{k=1,k\neq j}^N \sinc^2(\frac{\sigma}{2}(x_j-x_k))).
 \end{align}
For the chosen $\sigma$, let
\begin{equation}
 \max_{1\leq j\leq N}\sum_{k=1,k\neq j}^N \sinc^2(\frac{\sigma}{2}(x_j-x_k))= \frac{1}{2}.
\end{equation}
Assume that the maximum is taken for $x_1=0$, i.e. that
\begin{equation}
 \max_{1\leq j\leq N}\sum_{k=1,k\neq j}^N \sinc^2(\frac{\sigma}{2}(x_j-x_k))=\sum_{k=2}^N\sinc^2(\frac{\sigma}{2}x_k).
\end{equation}
Every $x_j(2\leq j\leq N)$ is contained in 
\begin{equation}
 \mathcal{E}_n=\{x\in\Bbb{R}:nq_{X}\leq |x| < (n+1)q_{X},n\geq 1\}.
\end{equation}
Every ball $B(x_j,q_{X})$ around $x_j$ with radius $q_{X}$ is disjoint from a ball around $x_k (k\neq j)$ with the same radius and these balls are 
contained in 
\begin{equation}
 \{x\in\Bbb{R}:(n-1)q_{X}\leq |x| \leq (n+2)q_{X}\}.
\end{equation}
The number of points in $\mathcal{E}_n (n\geq 1)$ can be get by computing volumes
\begin{equation}
 \#\{x_j\in\mathcal{E}_n\}\leq (n+2)-(n-1) \leq 3.
\end{equation}
Thus, if we use  $\sum_{n=1}^{\infty}\frac{1}{n^2}=\frac{\pi^2}{6}$, we have
\begin{align}
 \sum_{k=2}^N\sinc^2(\frac{\sigma}{2}x_k) &\leq \sum_{n=1}^{\infty}\#\{x_j\in\mathcal{E}_n\}\sup_{x\in\mathcal{E}_n}\sinc^2(\frac{\sigma}{2}x)  \leq \sum_{n=1}^{\infty}\#\{x_j\in\mathcal{E}_n\}\sup_{x\in\mathcal{E}_n}\dfrac{1}{(\frac{\sigma}{2}x)^2}\\
\nonumber                                           &\leq \sum_{n=1}^{\infty}3\dfrac{1}{(\frac{\sigma}{2}nq_{X})^2}
                                           =\frac{2\pi^2}{(\sigma q_{X})^2}.
\end{align}
When $\sigma=\frac{2\pi}{q_{X}}$, \eqref{eq:constrain} holds, then
\begin{equation}\label{eq:mine}
 \sum_{j,k=1}^N\lambda_j\lambda_k\Phi_{\sigma}(x_j-x_k) \geq \frac{\sigma}{2\pi}\widehat{\Phi}(\sigma)\frac{1}{2}||\mathbf{\lambda}||_2^2
                           \geq q_{X}^{-1}\widehat{\Phi}(\frac{2\pi}{q_{X}})||\mathbf{\lambda}||_2^2.
\end{equation}
\end{proof}	
We then obtain the following result for the condition number of the interpolation matrix $A$.
\begin{Corollary}
 Assume that $\Phi$ is a radial basis function such that its generalized Fourier transform exists and satisfies \eqref{eq:12}. Then
 \begin{equation}
  \mathrm{cond}(A)\leq cq_{X}^{-2\tau}.
 \end{equation}
\end{Corollary}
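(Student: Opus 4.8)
The plan is to assemble the two eigenvalue estimates already in hand and take their quotient. Since the preceding theorem shows that the smallest eigenvalue of the symmetric interpolation matrix $A$ is bounded below by a positive quantity, $A$ is positive definite and $\mathrm{cond}(A)=\gamma_{\max}/\gamma_{\min}$. It therefore suffices to bound $\gamma_{\max}$ from above and $\gamma_{\min}$ from below, each as a power of $q_{X}$, and then divide.

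For the numerator I would start from the Gershgorin estimate $\gamma_{\max}\leq Ch^{-1}$ derived above. Because $X$ is quasi-uniform, the mesh norm $h$ and the separation distance $q_{X}$ are comparable, i.e. there is a constant with $q_{X}\leq h\leq Cq_{X}$; hence $h^{-1}\leq Cq_{X}^{-1}$ and
\begin{equation}
 \gamma_{\max}\leq Cq_{X}^{-1}.
\end{equation}
For the denominator I would invoke the previous theorem, which gives $\gamma_{\min}\geq q_{X}^{-1}\widehat{\Phi}(\tfrac{2\pi}{q_{X}})$. The decay hypothesis \eqref{eq:12} supplies the lower bound $\widehat{\Phi}(\xi)\geq c_1(1+\|\xi\|_2^2)^{-\tau}$; evaluating at $\xi=\tfrac{2\pi}{q_{X}}$ and using that $q_{X}$ is bounded above (so that $1+(\tfrac{2\pi}{q_{X}})^2\leq Cq_{X}^{-2}$) yields $\widehat{\Phi}(\tfrac{2\pi}{q_{X}})\geq c_1C^{-\tau}q_{X}^{2\tau}$, and therefore
\begin{equation}
 \gamma_{\min}\geq cq_{X}^{2\tau-1}.
\end{equation}
Taking the ratio, the factor $q_{X}^{-1}$ from $\gamma_{\max}$ and the factor $q_{X}^{-(2\tau-1)}$ from $1/\gamma_{\min}$ combine to $q_{X}^{-2\tau}$, giving $\mathrm{cond}(A)\leq cq_{X}^{-2\tau}$.

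The only non-routine step is the passage from $(1+(\tfrac{2\pi}{q_{X}})^2)^{-\tau}$ to the clean power $q_{X}^{2\tau}$, which requires an a priori upper bound on $q_{X}$ (equivalently, a bound on the diameter of $\Omega$) so that the additive constant $1$ can be absorbed into the leading term; together with the quasi-uniformity comparison between $h$ and $q_{X}$ that feeds the numerator, these are the two places where the standing assumptions on $\Omega$ and $X$ are genuinely used. Both are mild regularity conditions already present in the setup, so I expect no real difficulty beyond bookkeeping the constants.
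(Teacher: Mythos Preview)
Your argument is correct and is precisely the route the paper intends: the Corollary is stated without proof immediately after the Gershgorin bound $\gamma_{\max}\leq Ch^{-1}$ and the theorem giving $\gamma_{\min}\geq q_{X}^{-1}\widehat{\Phi}(2\pi/q_{X})$, and your combination of these with the decay hypothesis \eqref{eq:12} and quasi-uniformity is exactly what is needed to fill the gap.
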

Our next result discloses the connection of the error between $u$ and $u_\sigma$.
\begin{theorem}
Let $\Omega\subseteq \R$ be an open and bounded domain, having a Lipschitz boundary and satisfying the interior cone condition.
Assume that $\Phi(x-x_j)=\varphi(|x-x_j|)$ is a radial basis function such that its generalized Fourier transform exists and satisfies \eqref{eq:12}.
Let $u(x)=\sum_{j=1}^N\lambda_j\Phi(x-x_j)$ be the interpolant on $X=\{x_1,\cdots,x_N\}\subset\Omega$ to a function $f\in W_2^\tau(\Omega)$ and 
$u_\sigma(x)=\sum_{j=1}^N\lambda_j\Phi_\sigma(x-x_j)$ be a band-limited function. Then there exists a positive constant such that
\begin{equation}
\|u-u_\sigma\|_{W_2^\tau(\Omega)}\leq c\|u\|_{W_2^\tau (\Omega)}
\end{equation}
\end{theorem}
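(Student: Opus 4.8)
The plan is to exploit the fact that $u_\sigma$ is precisely the mollification $u*\eta_\sigma$ of $u$, so that passing to the Fourier side turns the difference $u-u_\sigma$ into the multiplication of $\widehat{u}$ by a single fixed Fourier multiplier. Indeed, from \eqref{eq:16} one has $\widehat{u_\sigma}=\widehat{\eta_\sigma}\,\widehat{u}$, hence $\widehat{u-u_\sigma}=(1-\widehat{\eta_\sigma})\widehat{u}$, and the entire statement reduces to showing that the multiplier $1-\widehat{\eta_\sigma}$ is bounded on the weighted $L_2$ space that defines the norm $\|\cdot\|_{W_2^\tau}$.

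First I would pass to the whole line. Since restriction to $\Omega$ can only decrease the Sobolev norm, $\|u-u_\sigma\|_{W_2^\tau(\Omega)}\leq\|u-u_\sigma\|_{W_2^\tau(\R)}$, and I would then invoke the Fourier characterization \eqref{eq:19.2} to bound $\|u-u_\sigma\|_{W_2^\tau(\R)}^2$, up to a fixed constant, by $\int_{\R}(1+|\xi|^2)^\tau|1-\widehat{\eta_\sigma}(\xi)|^2|\widehat{u}(\xi)|^2\,d\xi$. Before using this I would verify finiteness: writing $\widehat{u}(\xi)=\widehat{\Phi}(\xi)\sum_{j}\lambda_j e^{-i\xi x_j}$, bounding the trigonometric sum by $\sum_j|\lambda_j|$, and using the decay estimate \eqref{eq:12} together with $\tau>d/2$ shows that $\int_{\R}(1+|\xi|^2)^\tau|\widehat{u}(\xi)|^2\,d\xi<\infty$, so $u\in W_2^\tau(\R)$ and every norm in sight is finite.

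The core step is the multiplier bound. By \eqref{eq:13} and the scaling \eqref{eq:14}, $\widehat{\eta_\sigma}\equiv 1$ on $[-\sigma,\sigma]$, so $1-\widehat{\eta_\sigma}$ vanishes there, and since $\widehat{\eta}\in C_0^\infty$ the constant $M:=\sup_{\xi}|1-\widehat{\eta_\sigma}(\xi)|$ is finite (for the $\sinc$ choice \eqref{eq:15} one in fact has $M=1$ and the integral is supported on $\{|\xi|>\sigma\}$, exactly as in the proof of Theorem \ref{thm::error}). Consequently the weighted integral is at most $M^2\int_{\R}(1+|\xi|^2)^\tau|\widehat{u}(\xi)|^2\,d\xi$, which by \eqref{eq:19.2} is at most a constant multiple of $\|u\|_{W_2^\tau(\R)}^2$; absorbing all constants into a single $c$ then gives the bound in terms of the global norm of $u$.

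The step I expect to be the main obstacle is the passage from the global norm $\|u\|_{W_2^\tau(\R)}$ that Plancherel produces back to the domain norm $\|u\|_{W_2^\tau(\Omega)}$ on the right-hand side. Because $u=\sum_j\lambda_j\Phi(\cdot-x_j)$ is already defined on all of $\R$, one cannot simply apply the extension estimate \eqref{eq:19.1} the way Theorem \ref{thm::error} does; instead I would control $\|u\|_{W_2^\tau(\R)}$ through the native-space machinery, using the Corollary that identifies $\mathcal{N}_\Phi(\R^d)$ with $W_2^\tau(\R^d)$ with equivalent norms, so that $\|u\|_{W_2^\tau(\R)}^2$ is comparable to the quadratic form $\sum_{i,j}\lambda_i\lambda_j\Phi(x_i-x_j)$ of \eqref{eq:9}, and then relating that quantity to the local norm $\|u\|_{W_2^\tau(\Omega)}$. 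Once this comparison is in place the remainder is the routine Plancherel and multiplier estimate described above.
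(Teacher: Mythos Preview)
Your multiplier argument is correct and is substantially more direct than the paper's own proof. The paper does not simply bound $|1-\widehat{\eta_\sigma}|$ by a constant; instead it performs the change of variable $\xi=\sigma\omega$, writes $\|u-u_\sigma\|_{W_2^\tau(\R)}^2$ as the quadratic form $\sum_{j,k}\lambda_j\lambda_k\,\sigma\int_{|\omega|\geq 1}(1+\sigma^2\omega^2)^{-\tau}e^{i\sigma\omega(x_j-x_k)}\,d\omega$, invokes the pointwise inequality $(1+\sigma^2\omega^2)^{-\tau}\leq 2^\tau\sigma^{-2\tau}(1+\omega^2)^{-\tau}$ for $|\omega|\geq 1$, and then recognizes the resulting integral as $\widehat{\Phi}$ so as to land on $\sum_{j,k}\lambda_j\lambda_k\Phi(\sigma(x_j-x_k))$. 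From there the paper passes to $\|\lambda\|_2^2$ and applies the stability bound \eqref{eq:constrain} (the minimum-eigenvalue theorem proved just before) to convert $\|\lambda\|_2^2$ back into the native-space quadratic form, arriving finally at $\|u\|_{W_2^\tau(\R)}^2$. Your one-line observation that $1-\widehat{\eta_\sigma}$ is an $L^\infty$ multiplier (with bound $1$ in the $\sinc$ case, and $1+\|\widehat{\eta}\|_\infty$ in general, independent of $\sigma$) bypasses this entire detour through $\|\lambda\|_2^2$ and the stability theorem.

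What the paper's longer route buys is an explicit tracking of the $h$- and $\sigma$-dependence: the prefactor comes out as $h^{2\tau-1}/\gamma_{\min}$, which together with $\gamma_{\min}\sim q_X^{-1}\widehat{\Phi}(2\pi/q_X)\sim h^{2\tau-1}$ is seen to be of order one. Your argument gives the same $h$-independent constant more cheaply, at the cost of not displaying this cancellation. Both approaches end at exactly the point you flag as the obstacle, namely replacing $\|u\|_{W_2^\tau(\R)}$ by $\|u\|_{W_2^\tau(\Omega)}$; the paper handles this in a single unexplained inequality at the very end, so your plan to go through the native-space identification \eqref{eq:9}--\eqref{eq:10} is at least as detailed as what the paper provides.
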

\begin{proof} Since $\Omega$ has a Lipschitz boundary, there still exists a zero extension.
\par
From a change of variable $\xi=\sigma\omega$, we obtain that
\begin{equation}
 \|u-u_\sigma\|_{W_2^\tau(\R)}^2 = \sum_{j,k=1}^N\lambda_j\lambda_k\sigma\int_{\|\omega\|_2\geq1}\frac{1}{(1+\|\sigma\omega\|_2^2)^\tau}e^{i\sigma\omega(x_j-x_k)}d\omega
\end{equation}
Since $\|\omega\|\geq 1$, we have $\dfrac{1}{(1+\sigma^2\|\omega\|_2^2)^\tau}\leq \dfrac{2^\tau}{\sigma^{2\tau}}\dfrac{1}{(1+\|\omega\|_2^2)^\tau}$~\cite{rieger2008sampling}, so that
\begin{align}
 \|u-u_\sigma\|_{W_2^\tau(\R)}^2 & \leq 2^\tau\sigma^{1-2\tau}\sum_{j,k=1}^N\lambda_j\lambda_k\int_{\|\omega\|_2\geq1}(1+\|\omega\|_2^2)^{-\tau}e^{i\sigma\omega(x_j-x_k)}d\omega\\
\nonumber                                    & \leq 2^\tau\sigma^{1-2\tau}\sum_{j,k=1}^N\lambda_j\lambda_k\int_{\R}(1+\|\omega\|_2^2)^{-\tau}e^{i\sigma\omega(x_j-x_k)}d\omega\\
\nonumber                                    & =2^\tau\sigma^{1-2\tau}\sum_{j,k=1}^N\lambda_j\lambda_k\int_{\R}\widehat{\Phi}(\omega)e^{i\sigma\omega(x_j-x_k)}d\omega\\
\nonumber                                    & =2^\tau\sigma^{1-2\tau}2\pi\sum_{j,k=1}^N\lambda_j\lambda_k\Phi(\sigma(x_j-x_k))\\
\nonumber                                    & =2^\tau \kappa^{1-2\tau}2\pi h^{2\tau-1}\sum_{j,k=1}^N\lambda_j\lambda_k\Phi(\sigma(x_j-x_k))\\
\nonumber                                    & =2^\tau \kappa^{1-2\tau}2\pi h^{2\tau-1}(\sigma h)^{-1} \|\mathbf{\lambda}\|_2^2\\
\nonumber                                    & =2^\tau \kappa^{-2\tau}2\pi h^{2\tau-1}\|\mathbf{\lambda}\|_2^2\\
\nonumber                                    & \leq ch^{2\tau-1}\|\mathbf{\lambda}\|_2^2.
 \end{align}
From \eqref{eq:constrain}, it becomes
\begin{align}
 \|u-u_\sigma\|_{W_2^\tau(\Omega)}^2 &\leq \|u-u_\sigma\|_{W_2^\tau(\R)}^2 \leq ch^{2\tau-1}\frac{1}{\gamma_{\min}}\sum_{j,k=1}^N\lambda_j\lambda_k\Phi_\sigma(x_j-x_k)\\
\nonumber                                      & \leq ch^{2\tau-1}\frac{1}{\gamma_{\min}}\sum_{j,k=1}^N\lambda_j\lambda_k\Phi(x_j-x_k)\\
\nonumber                                      & =ch^{2\tau-1}\frac{1}{\gamma_{\min}} (\sum_{j=1}^N\lambda_j\Phi(x-x_j), \sum_{k=1}^N\lambda_k\Phi(x-x_k))_{\mathcal{N}_{\Phi}}\\
\nonumber                                      & =ch^{2\tau-1}\frac{1}{\gamma_{\min}} \|u\|_{W_2^\tau(\R)}^2\leq C\|u\|_{W_2^\tau (\Omega)}^2.
 \end{align}
\end{proof}
For our numerical illustrations we choose the following simple boundary value problem:
\begin{equation}\label{eq:1Dmodel}
\begin{cases}
 -\Delta u(x)+\pi^2u(x)=2\pi^2\sin\pi x, x\in(0,\pi),\\
 u(0)=u(\pi)=0,
\end{cases}
\end{equation}
which has exact solution $u(x)=\sin\pi x$.
We now compare the band-limited approximation to the unsymmetric collocation based on $MQ$ functions. In the left half of Table \ref{tab::RMS} we apply $\Phi(r)=\sqrt{r^2+1}$ and in the right half we use $\Phi*\eta_\sigma(r)$ to evaluate the root-mean-square error (RMS-error). 
\begin{table}[H]
\centering
\begin{tabular}{ccc}
\toprule
 $N$          &$\Phi(r)$                                &$\Phi*\eta_\sigma(r)$\\
 \midrule
 9            &1.469348643e-04                           &1.469348658e-04\\
10            &9.414500417e-05                           &9.414500776e-05 \\
11            &2.806645307e-05                           &2.806731328e-05\\
12            &1.823679202e-05                           &1.823613930e-05\\
13            &5.348123608e-06                           &5.345923089e-06\\
14            &3.512156051e-06                           &3.512046451e-06\\ 
15            &1.007928224e-06                           &7.282274291e-07\\
\bottomrule
\end{tabular}
\caption{RMS errors for the approximate solution.}
\label{tab::RMS}
\end{table}
Having discussed the approximation quality of the band-limited approximation we now come to the multilevel fast multipole method.
\subsection{Low-rank representation}
The inverse Fourier transform for a given $\Phi_\sigma$ is expressed as
\begin{equation}
 \Phi_\sigma(x-x_j)=\dfrac{1}{2\pi}\int_{-\sigma}^{\sigma}\widehat{\Phi}(\xi)e^{i\xi(x-x_j)}d\xi.
\end{equation}
It can be approximated by constructing a simple numerical quadrature to obtain
\begin{equation}\label{eq:quadrature}
 \Phi_\sigma(x-x_j)=\sum_{m=1}^M\omega_m\widehat{\Phi}(\xi_m)e^{i\xi_m(x-x_j)}+\varepsilon_M,
\end{equation}
with quadrature weights $\omega_m$ and error term $\varepsilon_M$.
Next, we use a Fourier series form of the term $\widehat{\Phi}(\xi_m)$ given by
\begin{equation}
 \widehat{\Phi}(\xi_m)\approx\sum_{q=-Q}^Q\Phi(q)e^{-iq\xi_m}.
\end{equation}
Then the expansion for $\Phi_\sigma(x-x_j)$ is given by
\begin{equation}\label{eq:fmmtype}
 \Phi_\sigma(x-x_j)=\sum_{m=1}^M\mathcal{U}(\xi_m)\mathcal{C}(\xi_m)\mathcal{V}(\xi_m)+\varepsilon,
\end{equation}
where $\mathcal{C}(\xi_m)$ is the translation operator given by
\begin{equation}\label{eq:M2Loperator}
 \mathcal{C}(\xi_m)=\dfrac{1}{2\pi}\sum_{q=-Q}^{Q}\Phi(q)e^{-iq\xi_m},
\end{equation}
where
$\mathcal{V}(\xi_m)$ is the multipole expansion (aggregation) of the source points given by
\begin{equation}\label{eq:P2M}
 \mathcal{V}(\xi_m)=e^{-i\xi_mx_j},
\end{equation}
and $\mathcal{U}(\xi_m)=\omega_me^{i\xi_m x}$ is the 
L2P operator. In practice, one typically chooses
 \begin{align*}
  \omega_m\rightarrow \Delta\xi=2\sigma/M,\\
  \xi_m=-\sigma+(m-1)\Delta\xi,\\
  Q=M/2~\textrm{ and }~\sigma=\pi.
 \end{align*}
The formula \eqref{eq:fmmtype} provides the starting point for the FMM in this paper. This construction can be extened to the 2D or higher dimensions. 
For example, in 2D, the band-limited approximation reads:
\begin{align} \label{eq:28}
\Phi_\sigma(x-x_j,y-y_j)&=\dfrac{1}{(2\pi)^2}\int_{-\sigma}^\sigma\int_{-\sigma}^\sigma\widehat{\Phi}(\xi_1,\xi_2)e^{i\xi_1(x-x_j)}e^{i\xi_2(y-y_j)}d\xi_1 d\xi_2\\
\nonumber                        &=\dfrac{1}{(2\pi)^2}\sum_{m_1,m_2}\omega_{1_{m_1}}\omega_{2_{m_2}}\widehat{\Phi}(\xi_{1_{m_1}},\xi_{2_{m_2}})e^{i\xi_{1_{m_1}}(x-x_j)}e^{i\xi_{2_{m_2}}(y-y_j)}+\varepsilon_{\mathbf{M}}\\
 \nonumber                        &=\sum_{m_1,m_2}\omega_{1_{m_1}}\omega_{2_{m_2}}\mathcal{C}(\xi_{1_{m_1}},\xi_{2_{m_2}})e^{i\xi_{1_{m_1}}(x-x_j)}e^{i\xi_{2_{m_2}}(y-y_j)}+\varepsilon_{\mathbf{M}},
 \end{align}
with $\mathcal{C}(\xi_{1_{m_1}},\xi_{2_{m_2}})=\dfrac{1}{(2\pi)^2}\widehat{\Phi}(\xi_{1_{m_1}},\xi_{2_{m_2}})$. Let $\mathbf{x}=(x,y),~\mathbf{x}_j=(x_j,y_j)$ and $\mathbf{\xi}=(\xi_1,\xi_2),~\mathbf{\omega}=(\omega_1,\omega_2)$, \eqref{eq:28} can be rewritten as
\begin{equation}
 \label{eq:29} \Phi_\sigma(\mathbf{x}-\mathbf{x}_j)=\sum_{\mathbf{m}=1}^{\mathbf{M}}\mathbf{\omega}_{\mathbf{m}}\mathcal{C}(\mathbf{\xi}_\mathbf{m})e^{i\mathbf{\xi}_{\mathbf{m}}(\mathbf{x}-\mathbf{x}_j)}+\varepsilon_{\mathbf{M}},
\end{equation}
with $\mathcal{C}(\mathbf{\xi}_\mathbf{m})=\dfrac{1}{(2\pi)^2}\widehat{\Phi}(\mathbf{\xi}_{\mathbf{m}})$.
We have not yet exploited the band-limited approximation for the usage within the FMM and will do this in the following.
\section{Fast Multipole Method (FMM) Based on Bandlimited Function for RBFs}\label{subsection::FMMRBF}
In this section, we will discuss the FMM in 2D. We neglect the error and approximate $\Phi_\sigma$ by
\begin{equation}\label{eq:35}
 \Phi_\sigma^{FMM}(\mathbf{x}_i-\mathbf{x}_j)=\sum_{\mathbf{m}=1}^{\mathbf{M}}\mathbf{\omega}_{\mathbf{m}}\mathcal{C}(\mathbf{\xi}_{\mathbf{m}})e^{i\mathbf{\xi}_{\mathbf{m}}(\mathbf{x}_i-\mathbf{x}_j)},
\end{equation}
where $\mathcal{C}(\mathbf{\xi}_{\mathbf{m}})=\dfrac{1}{(2\pi)^2}\widehat{\Phi}(\mathbf{\xi}_{\mathbf{m}})$. 
\par
Figure \ref{fig::FMM} graphically illustrates the construction within the FMM. Let $\mathbf{x}_i$ and $\mathbf{x}_j$ be the evaluation point and source point, respectively. 
For two well-separated squares $a$ and $b$, $\mathbf{x}_a$ and $\mathbf{x}_b$ are their centers and $\mathbf{x}_i\in a$ and $\mathbf{x}_j\in b$.
We have:
\begin{equation}\label{eq:36}
 \mathbf{x}_i-\mathbf{x}_j=(\mathbf{x}_i-\mathbf{x}_a)+(\mathbf{x}_a-\mathbf{x}_b)+(\mathbf{x}_b-\mathbf{x}_j)=\mathbf{r}_{ia}+\mathbf{r}_{ab}+\mathbf{r}_{bj}.
\end{equation}
\begin{figure}[H]
\includegraphics[height=2.4in,width=5in]{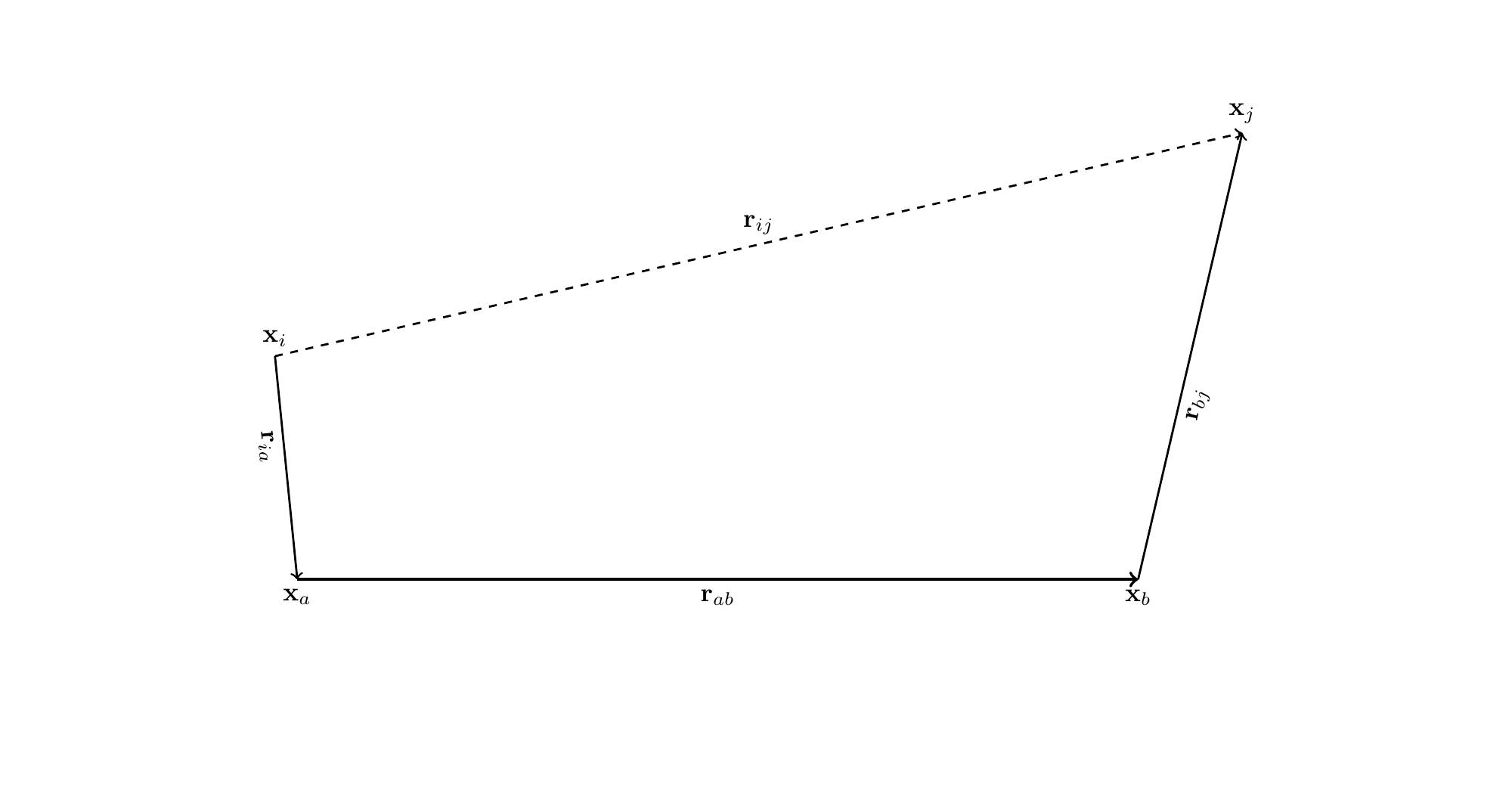}
\caption{\label{fig::FMM}Vector definitions for FMM expansion.}
\end{figure}
It is easy to see that \eqref{eq:35} can be rewritten as a low-rank approximation,
\begin{equation}\label{eq:37}
 \Phi_\sigma^{FMM}(\mathbf{x}_i-\mathbf{x}_j)
 =\sum_{\mathbf{m}=1}^{\mathbf{M}}\mathbf{\omega}_{\mathbf{m}}e^{i\mathbf{\xi}_{\mathbf{m}}\mathbf{r}_{ia}}\mathcal{C}(\mathbf{\xi}_{\mathbf{m}})e^{i\mathbf{\xi}_{\mathbf{m}}\mathbf{r}_{ab}}e^{i\mathbf{\xi}_{\mathbf{m}}\mathbf{r}_{bj}}.
\end{equation}
When we use an iterative method to solve \eqref{eq:1}, the necessary matrix-vector multiply hast to be computed in each iteration. This typically represents the bottleneck of any iterative solver. We can now express the matrix-vector multiply as
\begin{align}\label{eq:38} 
 &\sum_{j=1}^N\lambda_j\Phi(\mathbf{x}_i-\mathbf{x}_j)\\
\nonumber &\approx\underbrace{\sum_{b\in \mathcal{N}_a}\sum_{j\in \mathcal{G}_b}     \lambda_j\Phi(\mathbf{x}_i-\mathbf{x}_j)}_{\text{Near-field}}+
 \underbrace{\underbrace{\sum_{\mathbf{m}=1}^{\mathbf{M}}e^{i\mathbf{\xi}_{\mathbf{m}}\mathbf{r}_{ia}}}_{\text{disaggregation}}\underbrace{\sum_{b\notin\mathcal{N}_a}\mathbf{\omega}_{\mathbf{m}}\mathcal{C}
 (\mathbf{\xi}_{\mathbf{m}})e^{i\mathbf{\xi}_{\mathbf{m}}\mathbf{r}_{ab}}}_{\text{translation}}\underbrace{\sum_{j\in\mathcal{G}_b}\lambda_je^{i\mathbf{\xi}_{\mathbf{m}}\mathbf{r}_{bj}}}_{\text{aggregation}}}_{\text{Far-field}}	
\end{align}
where $\mathcal{G}_a$ denotes all particles in group $a$ and $\mathcal{N}_a$ denotes all neighbour groups of group $a$, $1\leq a,b\leq p$ and $p=N/\overline{N}$ is the number of groups.
The total complexity of the FMM is estimated as follows:\\
1. Near-field : $T_1=C_1ep\overline{N}^2=C_1eN\overline{N}$, $e$ is the average number of neighbors and $C_1$ is a constant.\\
2. Aggregation: $T_2=C_1M\overline{N}p=C_1MN$.\\
3. Translation: $T_3=C_1Mp(p-e)$.\\
4. Disaggregation: $T_4=C_1M\overline{N}p=C_1MN$.\\
The total complexity: 
\begin{equation}
T=C_1eN\overline{N}+C_1MN+C_1Mp(p-e)+C_1MN.
\end{equation}
Minimizing with respect to $M$ yields the result of $\mathcal{O}(N^{1.5})$ for $M\sim \overline{N} \sim \sqrt{N}$.

A further reduction in the computational cost is achieved when the FMM is replaced by a multilevel approximation.
\subsection{Multilevel Fast Multipole Method (MLFMM)}
The idea behind FMM was extended and applied in a recursive manner, leading to the multilevel fast multipole method (MLFMM) (see for example \cite{MarR05}). 
This algorithm has three steps: the upsweep corresponds to building and propagating multipole expansions (M2P and M2M) up the tree, the coupling phase corresponds to computing the M2L operator, and the downsweep corresponds to propagating local expansions (L2L and L2P). Before starting the algorithm, it is necessary to  recall the Weierstrass approximation theorem.
\begin{theorem}(Weierstrass Approximation Theorem)
Let $g$ be a continuous function on the closed and bounded interval $[a,b]\subset\R$. Then, for any $\varepsilon>0$, there exists a polynomial $P$ such that
 \begin{equation}
  \sup_{x\in[a,b]}|g(x)-P(x)|<\varepsilon.
 \end{equation}
In other words, any continuous function on a closed and bounded interval can be uniformly approximated on that interval by polynomials to any degree of accuracy.
\end{theorem}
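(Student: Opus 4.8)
The plan is to give a constructive proof via Bernstein polynomials, which yields the uniform approximation directly and fits the computational spirit of the paper. First I would reduce to the unit interval: the affine change of variable $x = a + (b-a)t$ with $t \in [0,1]$ maps polynomials to polynomials and preserves the supremum norm, so it suffices to approximate a continuous $g$ on $[0,1]$. For such a $g$ I would introduce the degree-$n$ Bernstein polynomial
\begin{equation}
  B_n(g)(t) = \sum_{k=0}^n g\!\left(\tfrac{k}{n}\right)\binom{n}{k} t^k (1-t)^{n-k},
\end{equation}
which is manifestly a polynomial in $t$, and prove that $B_n(g) \to g$ uniformly as $n \to \infty$.

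The engine of the argument is three binomial moment identities, $\sum_k \binom{n}{k} t^k(1-t)^{n-k} = 1$, $\sum_k \tfrac{k}{n}\binom{n}{k}t^k(1-t)^{n-k} = t$, and $\sum_k (\tfrac{k}{n}-t)^2 \binom{n}{k} t^k(1-t)^{n-k} = \tfrac{t(1-t)}{n} \le \tfrac{1}{4n}$; these I would verify by differentiating the binomial expansion of $(t + (1-t))^n$ with respect to $t$. Using the first identity to write $B_n(g)(t) - g(t) = \sum_k (g(k/n) - g(t))\binom{n}{k}t^k(1-t)^{n-k}$, I would then split the sum at a threshold $\delta$ separating indices with $|k/n - t| < \delta$ from the rest.

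On the near indices I would invoke the uniform continuity of $g$ on the compact interval $[0,1]$ to bound $|g(k/n) - g(t)| < \varepsilon/2$; summing against the nonnegative weights and using the first identity controls this contribution by $\varepsilon/2$, uniformly in $t$. On the far indices I would use the boundedness $|g| \le M$ together with the variance identity in Chebyshev form: whenever $|k/n - t| \ge \delta$ one has $1 \le (k/n - t)^2/\delta^2$, so the far sum is bounded by $2M/(4n\delta^2)$, again uniformly in $t$. Choosing first $\delta$ from uniform continuity and then $n$ large enough that $M/(2n\delta^2) < \varepsilon/2$ makes $\sup_t |B_n(g)(t) - g(t)| < \varepsilon$, and transporting back through the change of variable completes the proof.

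The main obstacle is obtaining the far-index bound uniformly in $t$: a naive pointwise estimate degrades near the endpoints, and it is precisely the second-moment (variance) identity, rather than continuity alone, that furnishes a bound independent of $t$. I would also note as an alternative the mollifier route closer to the paper's own theme — convolving a compactly supported continuous extension of $g$ with the Landau kernel $c_n(1-x^2)^n$ produces polynomials converging uniformly — but the Bernstein construction is preferable here because it is explicit, requires no extension of $g$, and never leaves the interval $[a,b]$.
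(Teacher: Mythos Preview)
Your Bernstein-polynomial argument is correct and is the standard constructive proof; the moment identities, the near/far split, and the Chebyshev-type variance bound are all handled properly, and your remark about uniformity in $t$ being secured by the second-moment identity is exactly the right point to flag.

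However, there is nothing to compare against: the paper does not prove the Weierstrass Approximation Theorem at all. It is merely \emph{recalled} as a classical result (``it is necessary to recall the Weierstrass approximation theorem'') and then invoked to justify that the exponentials $e^{i\xi x/2}$ can be approximated by polynomials, which in turn motivates the Lagrange interpolation used in the M2M and L2L steps of the multilevel scheme. So your proposal supplies a proof where the paper deliberately omits one; that is fine as an exercise, but it is not a reconstruction of anything the authors wrote.
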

We now discuss the steps of the MLFMM in more detail.\\
\textbf{Upsweep}: The multipole expansions are computed at the finest level, and then the expansions for the coarser level are obtained using interpolation and shifting.
Let $\mathbf{x}_{b_l}$ and $\mathbf{x}_{b_{l-1}}$ be centers of square $b$ at level $l$ and $l-1$, respectively. At the finest level $l$, the multipole expansion 
$\mathcal{V}_{b_l}(\mathbf{\xi}_{l_{\mathbf{m}_{(l)}}})=e^{i\mathbf{\xi}_{l_{\mathbf{m}_{(l)}}}\mathbf{r}_{b_lj}} (\mathbf{m}_{(l)}=1,2,\cdots,\mathbf{M}_{(l)}$) has $\mathbf{M}_{(l)}$ values. For level $l-1$, we need $\mathbf{M}_{(l-1)}$ values of $\mathcal{V}_{b_{l-1}}(\mathbf{\xi}_{{l-1}_{\mathbf{m}_{(l-1)}}})$. According to the Weierstrass approximation theorem, we can use 
a polynomial interpolation method to obtain the $\mathbf{M}_{(l-1)}$ values. Then, the multipole expansion for level $l-1$ will be
\begin{equation}\label{eq:43}
 \mathcal{V}_{b_{{(l-1)}}}(\mathbf{\xi}_{{l-1}_{\mathbf{m}_{(l-1)}}})=\underbrace{\underbrace{e^{i\mathbf{\xi}_{{l-1}_{\mathbf{m}_{(l-1)}}}(\mathbf{x}_{b_{l-1}}-\mathbf{x}_{b_{l}})}}_{\text{translation operator}}
 \sum_{\mathbf{m}_{(l)}=1}^{\mathbf{M}_{(l)}}\mathcal{P}_{\mathbf{m}_{(l)}}(\mathbf{\xi}_{{l-1}_{\mathbf{m}_{(l-1)}}})\underbrace{\mathcal{V}_{b_l}(\mathbf{\xi}_{l_{\mathbf{m}_{(l)}}})}_{\text{P2M operator at level $l$}}}_{\text{M2M operator from level $l$ to level $l-1$}},
\end{equation}
where $\mathbf{m}_{(l-1)}=1,2,\cdots,\mathbf{M}_{(l-1)}$ and $\mathcal{P}_{\mathbf{m}_l}(\mathbf{\xi}_{{l-1}_{\mathbf{m}_{(l-1)}}})$ are the interpolation coefficients. This process will stop at level 2. At this level, the multipole expansion is expressed as
\begin{equation}\label{eq:44}
 \mathcal{V}_{b_{2}}(\mathbf{\xi}_{2_{\mathbf{m}_{(2)}}})=e^{i\mathbf{\xi}_{{2}_{\mathbf{m}_{(2)}}}(\mathbf{x}_{b_{2}}-\mathbf{x}_{b_{3}})}\sum_{\mathbf{m}_{(3)}=1}^{\mathbf{M}_{(3)}}\mathcal{P}_{\mathbf{m}_{(3)}}(\mathbf{\xi}_{2_{\mathbf{m}_{(2)}}})\mathcal{V}_{b_3}(\mathbf{\xi}_{3_{\mathbf{m}_{(3)}}}),~~~\mathbf{m}_{(2)}=1,2,\cdots,\mathbf{M}_{(2)}.
\end{equation}
\textbf{Coupling.} The translation of the multipole expansion to the local expansion is completed by a multiplication with
\begin{equation}
\mathcal{T}_{(a_2b_2)}=\mathbf{\omega}_{2_{\mathbf{m}_{(2)}}}\mathcal{C}_{a_2}(\mathbf{\xi}_{2_{\mathbf{m}_{(2)}}})e^{i\mathbf{\xi}_{2_{\mathbf{m}_{(2)}}}(\mathbf{x}_{a_2}-\mathbf{x}_{b_2})}.
\end{equation}
Then the local expansion is of the form
\begin{equation}\label{eq:45}
 \mathcal{L}_{a_{2}}(\mathbf{\xi}_{2_{\mathbf{m}_{(2)}}})=\mathcal{C}_{a_2}(\mathbf{\xi}_{2_{\mathbf{m}_{(2)}}})e^{i\mathbf{\xi}_{2_{\mathbf{m}_{(2)}}}(\mathbf{x}_{a_2}-\mathbf{x}_{b_2})}\mathcal{V}_{b_{2_{\mathbf{m}_{(2)}}}}(\mathbf{\xi}_{2_{\mathbf{m}_{(2)}}}).
\end{equation}
\textbf{Downsweep.} We need to scatter local expansion down to the leaves. It is the inverse process of aggregation.
If the local expansions received at level $l-1$ are $\mathcal{L}_{a_{{(l-1)}}}(\mathbf{\xi}_{{l-1}_{\mathbf{m}_{(l-1)}}})$, then the contribution from all well-separated groups can be expressed as
\begin{equation}\label{eq:46}
 \mathcal{I}_{a_{l-1}}=\sum_{\mathbf{m}_{(l-1)}=1}^{\mathbf{M}_{(l-1)}}\mathbf{\omega}_{{l-1}_{\mathbf{m}_{(l-1)}}}e^{i\mathbf{\xi}_{{l-1}_{\mathbf{m}_{(l-1)}}}(\mathbf{x}_i-\mathbf{x}_{a_{l-1}})}\mathcal{L}_{a_{{l-1}}}(\mathbf{\xi}_{{l-1}_{\mathbf{m}_{(l-1)}}}).
\end{equation}
Afterwards, substituting the interpolation expression for $e^{i\mathbf{\xi}_{{l-1}_{\mathbf{m}_{(l-1)}}}(\mathbf{x}_i-\mathbf{x}_{a_{l-1}})}$, and changing the order of the two summations leads to
\begin{equation}\label{eq:47}
 e^{i\mathbf{\xi}_{{l-1}_{\mathbf{m}_{(l-1)}}}(\mathbf{x}_i-\mathbf{x}_{a_{l-1}})}=e^{i\mathbf{\xi}_{l_{\mathbf{m}_{(l)}}}(\mathbf{x}_{a_l}-\mathbf{x}_{a_{l-1}})}\sum_{\mathbf{m}_{(l)}=1}^{\mathbf{M}_{(l)}}e^{i\mathbf{\xi}_{l_{\mathbf{m}_{(l)}}}(x_i-x_{a_{l}})}\mathcal{P}_{\mathbf{m}_{(l)}}^T(\mathbf{\xi}_{{l-1}_{\mathbf{m}_{(l-1)}}}),
\end{equation}
which we then use to get
\begin{align}\label{eq:48}
\mathcal{I}_{a_l}&=\underbrace{\sum_{\mathbf{m}_{(l)}=1}^{\mathbf{M}_{(l)}}\mathbf{\omega}_{l_{\mathbf{m}_{(l)}}} e^{i\mathbf{\xi}_{l_{\mathbf{m}_{(l)}}}(\mathbf{x}_i-\mathbf{x}_{a_{l}})}}_{\text{L2P operator at level $l$:~$\mathcal{U}_{a_l}$}}\\
\nonumber &\underbrace{\sum_{\mathbf{m}_{(l-1)}=1}^{\mathbf{M}_{(l-1)}}\mathbf{\omega}_{{l-1}_{\mathbf{m}_{(l-1)}}}/\mathbf{\omega}_{l_{\mathbf{m}_{(l)}}}\mathcal{P}_{\mathbf{m}_{(l)}}^T(\mathbf{\xi}_{{l-1}_{\mathbf{m}_{(l-1)}}})
 e^{i\mathbf{\xi}_{l_{\mathbf{m}_{(l)}}}(\mathbf{x}_{a_l}-\mathbf{x}_{a_{l-1}})}}_{\text{L2L operator from level $l-1$ to level $l$}}\mathcal{L}_{a_{{l-1}_{\mathbf{m}_{(l-1)}}}}.
\end{align}
To clarify the process of the MLFMM, we look into a simple three-level formulation for the indirect evaluation of the interaction between a source point and an evaluation point. 
Consider the three-level vector construct shown in Figure \ref{fig::MLFMM}, where a simple case is shown to understand the discussed properties better.
\begin{figure}[H]
\centering
\includegraphics[height=2.2in,width=5in]{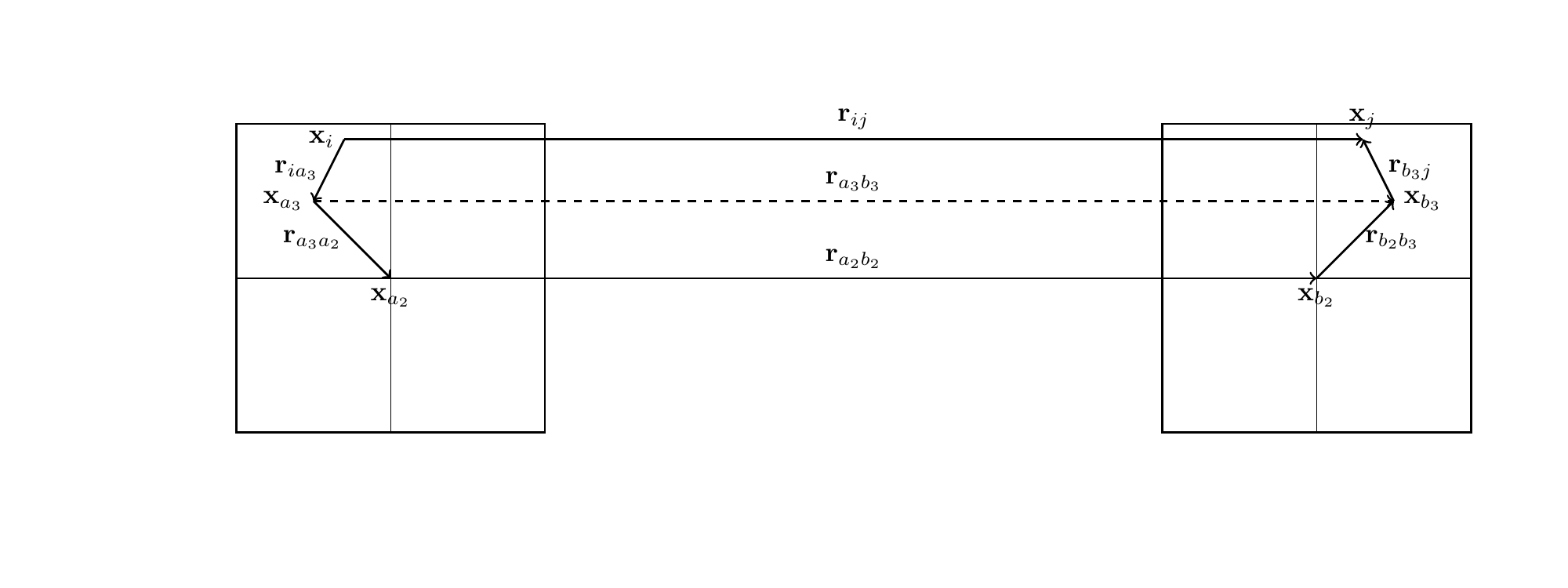}
\caption{\label{fig::MLFMM}Vector definitions for three-level FMM expansion.}
\end{figure}
\begin{equation}\label{eq:49}
 \mathbf{x}_i-\mathbf{x}_j=\mathbf{r}_{ij}=\mathbf{r}_{ia_{3}}+\mathbf{r}_{a_3a_2}+\mathbf{r}_{a_2b_2}+\mathbf{r}_{b_2b_3}+\mathbf{r}_{b_3j}.
\end{equation}
Because the multipole-to-local (M2L) translation will take place at level 2, discretizing the function $\Phi_\sigma(\mathbf{x}_i-\mathbf{x}_j)$ using numerical quadrature for level 2, we obtain
\begin{align}
\label{eq:50} 
&\Phi_\sigma^{FMM}(\mathbf{x}_i-\mathbf{x}_j)= \sum_{\mathbf{m}_{(2)}=1}^{\mathbf{M}_{(2)}}\omega_{2_{\mathbf{m}_{(2)}}}\mathcal{C}(\mathbf{\xi}_{2_{\mathbf{m}_{(2)}}})e^{i\mathbf{\xi}_{2_{\mathbf{m}_{(2)}}}(\mathbf{r}_{ia_{3}}+\mathbf{r}_{a_3a_2}+\mathbf{r}_{a_2b_2}+\mathbf{r}_{b_2b_3}+\mathbf{r}_{b_3j})}\\
\nonumber&=\sum_{\mathbf{m}_{(2)}=1}^{\mathbf{M}_{(2)}}\omega_{2_{\mathbf{m}_{(2)}}}e^{i\mathbf{\xi}_{2_{\mathbf{m}_{(2)}}}\mathbf{r}_{ia_{2}}}e^{i\mathbf{\xi}_{2_{\mathbf{m}_{(2)}}}\mathbf{r}_{a_3a_2}}\mathcal{C}\nonumber(\mathbf{\xi}_{3_{\mathbf{m}_{(2)}}})e^{i\mathbf{\xi}_{2_{\mathbf{m}_{(2)}}}\mathbf{r}_{a_2b_2}}e^{i\mathbf{\xi}_{2_{\mathbf{m}_{(2)}}}\mathbf{r}_{b_2b_3}}e^{i\mathbf{\xi}_{2_{\mathbf{m}_{(2)}}}\mathbf{r}_{b_3j}}
\end{align}
It is observed that the discrete values of $e^{i\mathbf{\xi}_{2_{\mathbf{m}_{(2)}}}\mathbf{r}_{ia_{3}}}$ and $e^{i\mathbf{\xi}_{2_{\mathbf{m}_{(2)}}}\mathbf{r}_{b_3j}}$ are for the frequency discretization at level 2. We do not compute their values directly. In order to save computing time, we obtain their values indirectly from the finer level 3 via an interpolation method. From the known values $e^{i\mathbf{\xi}_{3_{\mathbf{m}_{(3)}}}\mathbf{r}_{b_3j}}$ and $e^{i\mathbf{\xi}_{3_{\mathbf{m}_{(3)}}}\mathbf{r}_{ia_3}}$ at level 3, the approximations are of the form
\begin{align}
\label{eq:51}
 &e^{i\mathbf{\xi}_{2_{\mathbf{m}_{(2)}}}\mathbf{r}_{b_3j}}=\sum_{\mathbf{m}_{(3)}=1}^{\mathbf{M}_{(3)}}\mathcal{P}_{\mathbf{m}_{(3)}}(\mathbf{\xi}_{2_{\mathbf{m}_{(2)}}})e^{i\mathbf{\xi}_{3_{\mathbf{m}_{(3)}}}\mathbf{r}_{b_3j}},\\
\nonumber &e^{i\mathbf{\xi}_{2_{\mathbf{m}_{(2)}}}\mathbf{r}_{ia_{3}}}=\sum_{\mathbf{m}_{(3)}=1}^{\mathbf{M}_{(3)}}e^{i\mathbf{\xi}_{3_{\mathbf{m}_{(3)}}}\mathbf{r}_{ia_{3}}}\mathcal{P}_{\mathbf{m}_{(3)}}^T(\mathbf{\xi}_{2_{\mathbf{m}_{(2)}}}),
 \end{align}
where $\mathcal{P}_{\mathbf{m}_{(2)}\mathbf{m}_{(3)}}$ is the interpolation coefficient between $\mathbf{\xi}_{2_{\mathbf{m}_{(2)}}}$ and $\mathbf{\xi}_{3_{\mathbf{m}_{(3)}}}$ and the superscript $T$ implies matrix transposition.
The interpolation strategy of the second form in \eqref{eq:51} is also called anterpolation.
\par
Inserting \eqref{eq:51} into \eqref{eq:50}, we have
\begin{align}
\label{eq:53}
 \Phi_\sigma^{FMM}(\mathbf{x}_i-\mathbf{x}_j)
        =&\sum_{\mathbf{m}_{(2)}=1}^{\mathbf{M}_{(2)}}\omega_{2_{\mathbf{m}_{(2)}}}\sum_{\mathbf{m}_{(3)}=1}^{\mathbf{M}_{(3)}}\mathcal{P}_{\mathbf{m}_{(3)}}^T(\mathbf{\xi}_{2_{\mathbf{m}_{(2)}}})e^{i\mathbf{\xi}_{3_{\mathbf{m}_{(3)}}}\mathbf{r}_{ia_{3}}}e^{i\mathbf{\xi}_{2_{\mathbf{m}_{(2)}}}\mathbf{r}_{a_3a_2}}\mathcal{C}(\mathbf{\xi}_{3_{\mathbf{m}_{(2)}}})\\
\nonumber         &e^{i\mathbf{\xi}_{2_{\mathbf{m}_{(2)}}}\mathbf{r}_{a_2b_2}}e^{i\mathbf{\xi}_{2_{\mathbf{m}_{(2)}}}r_{b_2b_3}}\sum_{\mathbf{m}_3=1}^{\mathbf{M}_{(3)}}\mathcal{P}_{\mathbf{m}_{(3)}}(\mathbf{\xi}_{2_{\mathbf{m}_{(2)}}})e^{i\mathbf{\xi}_{3_{\mathbf{m}_{(3)}}}\mathbf{r}_{b_3j}}\\
\nonumber=&\underbrace{\sum_{\mathbf{m}_{(3)}=1}^{\mathbf{M}_{(3)}}\omega_{3_{\mathbf{m}_{(2)}}}e^{i\mathbf{\xi}_{3_{\mathbf{m}_{(3)}}}\mathbf{r}_{ia_{3}}}\sum_{\mathbf{m}_{(2)}=1}^{\mathbf{M}_{(2)}}\mathbf{\omega}_{2_{\mathbf{m}_{(2)}}}/\omega_{3_{\mathbf{m}_{(2)}}}\mathcal{P}_{\mathbf{m}_{(3)}}^T(\mathbf{\xi}_{2_{\mathbf{m}_{(2)}}})e^{i\mathbf{\xi}_{2_{\mathbf{m}_{(2)}}}\mathbf{r}_{a_3a_2}}}_{\text{Downsweep}}\\
\nonumber         &\underbrace{\mathcal{C}(\mathbf{\xi}_{3_{\mathbf{m}_{(2)}}})e^{i\mathbf{\xi}_{2_{\mathbf{m}_{(2)}}}\mathbf{r}_{a_2b_2}}}_{\text{Coupling}}\underbrace{e^{i\mathbf{\xi}_{2_{\mathbf{m}_{(2)}}}\mathbf{r}_{b_2b_3}}\sum_{\mathbf{m}_{(3)}=1}^{\mathbf{M}_{(3)}}\mathcal{P}_{\mathbf{m}_{(3)}}(\mathbf{\xi}_{2_{\mathbf{m}_{(2)}}})e^{i\mathbf{\xi}_{3_{\mathbf{m}_{(3)}}}\mathbf{r}_{b_3j}}}_{\text{Upsweep}}.
 \end{align}
Let the matrices $\mathbf{U}^{l}$, $\mathbf{K}^{l}$, and $\mathbf{V}^{l}$ identify as $L2P (or~ L2L)$, $M2L$, and $P2M (or~M2M)$ operators, respectively. The matrix-vector product
will be written as
\begin{align}\label{eq:54}
\sum_{j=1}^N\lambda_j\Phi(\mathbf{x}_i-\mathbf{x}_j)&=A\mathbf{\lambda}=A^{near}\mathbf{\lambda}+A^{far}\mathbf{\lambda}\\
\nonumber &=\underbrace{A^{diagonal}\mathbf{\lambda}+A^{near-diagonal}\mathbf{\lambda}}_{\text{sparse matrix-vector product}}+\underbrace{\mathbf{U}^l\mathbf{K}^l\mathbf{V}^l\mathbf{\lambda}}_{\text{MLFMM product}}
\end{align}
\subsubsection{Interpolation and Anterpolation}
In this section, we discuss an approach for interpolating the multipole expansion up the tree and anterpolating local expansion down the tree. 
In general, the truncation number at different levels satisfies $\mathbf{M}_{(l)}< \mathbf{M}_{(l-1)}$. But through scaling property of the Fourier transform, we will have $\mathbf{M}_{(l)}=\mathbf{M}_{(l-1)}$.
\par
Here, we introduce scaling property of Fourier transform briefly. For a function $f(x)$, $x\in\Omega\subset\Bbb{R}$, its Fourier transform is $\mathcal{F}(\xi)$.
For any $s>0$, then
\begin{equation}
\mathcal{F}(f(sx))= \int_\Omega f(sx)e^{-ix\xi}dx
                  = \frac{1}{s}\int_{s\Omega} f(sx)e^{-isx\frac{\xi}{s}}d(sx)
                  = \frac{1}{s}\mathcal{F}(\frac{\xi}{s}).
\end{equation}
Suppose $f(x)$ is a band-limited function with the bandwidth $\sigma>0$, the function $f(x)$ can be approximated by 
\begin{equation}
f(\mathbf{x})=\dfrac{1}{(2\pi)^d}\sum_{\mathbf{m}=1}^\mathbf{M}\omega_m\mathcal{F}(\xi_m)e^{i\mathbf{\xi}_\mathbf{m}\mathbf{x}}+o(\mathbf{\xi}_\mathbf{M}),
\end{equation}
thus,
\begin{equation}
f(s\mathbf{x})=\dfrac{1}{(2\pi)^d}\sum_{\mathbf{m}=1}^\mathbf{M}\frac{\mathbf{\omega}_\mathbf{m}}{s}\mathcal{F}(\frac{\mathbf{\xi}_\mathbf{m}}{s})e^{i\frac{\xi_m}{s}\mathbf{x}}+o(\dfrac{\mathbf{\xi}_\mathbf{M}}{s}).
\end{equation}
Therefore, the following Lemma holds.
\begin{lemma}
 Suppose that $\varepsilon_{\mathbf{M}}>0$ is a real number, such that
 \begin{equation}\label{eq:30}
  |\Phi_\sigma(\mathbf{x}-\mathbf{y})-\dfrac{1}{(2\pi)^2}\sum_{\mathbf{m}=1}^{\mathbf{M}}\mathbf{\omega}_{\mathbf{m}}\widehat{\Phi}(\mathbf{\xi}_\mathbf{m})e^{i\mathbf{\xi}_{\mathbf{m}}(\mathbf{x}-\mathbf{y})}|<\varepsilon_{\mathbf{M}},
 \end{equation}
 for all $\mathbf{y}$ is contained inside a square $A$ of length $1$, and $\mathbf{x}$ is an arbitrary point belonging to the interaction region of $A$.
 Suppose further $s>0$ is a real number, with $\overline{\mathbf{y}}\in A_s,$ the length of the square $A_s$ is $s$, and the function $\psi$ are defined by the formula
 \begin{equation}\label{eq:31}
  \psi(\overline{\mathbf{x}})=\dfrac{1}{s}\widehat{\Phi}(\frac{\mathbf{\xi}_{\mathbf{m}}}{s})e^{i\mathbf{\xi}_{\mathbf{m}}\frac{(\overline{\mathbf{x}}-\overline{\mathbf{y}})}{s}}  
 \end{equation}
for all $\overline{\mathbf{x}}\in \R^2$. Finally, suppose that
\begin{equation}\label{eq:32}
 u(\overline{\mathbf{x}})=\sum_{j=1}^N\lambda_j\Phi_\sigma(\overline{\mathbf{x}}-\overline{\mathbf{x}_j}),
\end{equation}
is the potential field located at points $\overline{\mathbf{x}}_1,\overline{\mathbf{x}}_2,\cdots,\overline{\mathbf{x}}_N$ inside the square $A_s$. Then for any $\overline{\mathbf{x}}$ belongs to
the interaction region of $A_s$,
\begin{equation}\label{eq:33}
 |u(\overline{\mathbf{x}})-\dfrac{1}{(2\pi)^2}\sum_{\mathbf{m}=1}^{\mathbf{M}}\mu_{\mathbf{m}}\psi(\overline{\mathbf{x}})|<\dfrac{\varepsilon_{\mathbf{M}}}{s}\sum_{j=1}^N|\lambda_j|,
\end{equation}
with 
\begin{equation}\label{eq:34}
 \mu_{\mathbf{m}}=\mathbf{\omega}_{\mathbf{m}}\sum_{j=1}^N\lambda_je^{i\mathbf{\xi}_{\mathbf{m}}\frac{\overline{\mathbf{y}}-\overline{\mathbf{x}}_j}{s}}
\end{equation}
for all~$\mathbf{m}=1,2,\cdots,\mathbf{M}$.
\end{lemma}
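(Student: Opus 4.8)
The plan is to prove \eqref{eq:33} by direct substitution, reducing the doubly indexed scaled sum to a sum of single scaled quadratures and then transferring the unit-box estimate \eqref{eq:30} through the scaling property of the Fourier transform recorded just above the lemma. First I would insert the definition \eqref{eq:34} of $\mu_{\mathbf{m}}$ and the definition \eqref{eq:31} of $\psi$ into $\frac{1}{(2\pi)^2}\sum_{\mathbf{m}=1}^{\mathbf{M}}\mu_{\mathbf{m}}\psi(\overline{\mathbf{x}})$. Since both index sets are finite, the order of the sums over $\mathbf{m}$ and over $j$ may be interchanged with no convergence issue. The two exponential factors then combine: the factor $e^{i\mathbf{\xi}_{\mathbf{m}}(\overline{\mathbf{y}}-\overline{\mathbf{x}}_j)/s}$ carried by $\mu_{\mathbf{m}}$ and the factor $e^{i\mathbf{\xi}_{\mathbf{m}}(\overline{\mathbf{x}}-\overline{\mathbf{y}})/s}$ carried by $\psi$ multiply to $e^{i\mathbf{\xi}_{\mathbf{m}}(\overline{\mathbf{x}}-\overline{\mathbf{x}}_j)/s}$, so the expansion center $\overline{\mathbf{y}}$ cancels and each source point is coupled to $\overline{\mathbf{x}}$ only through the single difference $\overline{\mathbf{x}}-\overline{\mathbf{x}}_j$. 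This leaves $\sum_{j=1}^N\lambda_j\,G(\overline{\mathbf{x}}-\overline{\mathbf{x}}_j)$, where $G(\overline{\mathbf{x}}-\overline{\mathbf{x}}_j)=\frac{1}{(2\pi)^2}\sum_{\mathbf{m}=1}^{\mathbf{M}}\frac{\mathbf{\omega}_{\mathbf{m}}}{s}\widehat{\Phi}(\mathbf{\xi}_{\mathbf{m}}/s)e^{i\mathbf{\xi}_{\mathbf{m}}(\overline{\mathbf{x}}-\overline{\mathbf{x}}_j)/s}$ is exactly the scaled quadrature.

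The heart of the argument is to show that $G$ reproduces $\Phi_\sigma(\overline{\mathbf{x}}-\overline{\mathbf{x}}_j)$ to within $\varepsilon_{\mathbf{M}}/s$. Here I would invoke the scaling property of the Fourier transform stated before the lemma: writing the difference of two points of the box $A_s$ of side $s$ as $\overline{\mathbf{x}}-\overline{\mathbf{x}}_j=s\,\mathbf{w}$, with $\mathbf{w}$ the difference of the corresponding points of the unit box $A$, the sum $G$ is precisely the image under that scaling of the unit-box quadrature appearing in \eqref{eq:30}. The hypotheses of \eqref{eq:30} transfer because $\overline{\mathbf{x}}_j\in A_s$ and $\overline{\mathbf{x}}$ in the interaction region of $A_s$ rescale to a source point in $A$ and an evaluation point in the interaction region of $A$, so the unit estimate applies to the rescaled argument $\mathbf{w}$. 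The weight rescaling $\mathbf{\omega}_{\mathbf{m}}\mapsto\mathbf{\omega}_{\mathbf{m}}/s$, equivalently the prefactor $1/s$ in \eqref{eq:31}, is what converts the unit bound $\varepsilon_{\mathbf{M}}$ into the sharpened bound, giving $|\Phi_\sigma(\overline{\mathbf{x}}-\overline{\mathbf{x}}_j)-G(\overline{\mathbf{x}}-\overline{\mathbf{x}}_j)|<\varepsilon_{\mathbf{M}}/s$ uniformly in $j$.

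Finally I would subtract $G$ from $\Phi_\sigma$ termwise in $u(\overline{\mathbf{x}})=\sum_{j=1}^N\lambda_j\Phi_\sigma(\overline{\mathbf{x}}-\overline{\mathbf{x}}_j)$ and apply the triangle inequality, obtaining $|u(\overline{\mathbf{x}})-\frac{1}{(2\pi)^2}\sum_{\mathbf{m}=1}^{\mathbf{M}}\mu_{\mathbf{m}}\psi(\overline{\mathbf{x}})|\le\sum_{j=1}^N|\lambda_j|\,|\Phi_\sigma(\overline{\mathbf{x}}-\overline{\mathbf{x}}_j)-G(\overline{\mathbf{x}}-\overline{\mathbf{x}}_j)|<\frac{\varepsilon_{\mathbf{M}}}{s}\sum_{j=1}^N|\lambda_j|$, which is exactly \eqref{eq:33}. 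The routine parts are the interchange of the two finite sums and the exponential bookkeeping; the step that needs genuine care is the quantitative transfer of the error, namely checking that the rescaled configuration still meets the interaction-region hypothesis of \eqref{eq:30} and that the single factor $1/s$ from the weight scaling is the only power of $s$ produced, so that the bound comes out as $\varepsilon_{\mathbf{M}}/s$ rather than $\varepsilon_{\mathbf{M}}$ or $s\,\varepsilon_{\mathbf{M}}$. I expect this scaling bookkeeping to be the main obstacle.
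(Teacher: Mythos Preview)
Your proposal is correct and follows essentially the same approach as the paper. In fact the paper does not give a detailed proof of this lemma at all: it records the Fourier scaling identity $\mathcal{F}(f(s\cdot))(\xi)=\tfrac{1}{s}\mathcal{F}(\xi/s)$, writes the corresponding scaled quadrature $f(s\mathbf{x})=\tfrac{1}{(2\pi)^d}\sum_{\mathbf{m}}\tfrac{\omega_{\mathbf{m}}}{s}\mathcal{F}(\xi_{\mathbf{m}}/s)e^{i\xi_{\mathbf{m}}\mathbf{x}/s}+o(\xi_{\mathbf{M}}/s)$, and then simply states ``Therefore, the following Lemma holds.'' Your substitution of \eqref{eq:31} and \eqref{eq:34}, cancellation of the expansion center $\overline{\mathbf{y}}$, transfer of the unit-box estimate \eqref{eq:30} through the scaling, and final triangle inequality are exactly the details that the paper leaves implicit; the scaling-bookkeeping concern you flag (that the single factor $1/s$ from the weights is what produces $\varepsilon_{\mathbf{M}}/s$) is the one nontrivial point, and it matches the paper's $o(\xi_{\mathbf{M}}/s)$ remark.
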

For a three-level FMM, because the length of parent square is twice that of child square, i.e., $s=2$, it is easy to find that $\mathbf{\xi}_{2_{\mathbf{m}_{(2)}}}=\frac{\mathbf{\xi}_{3_{\mathbf{m}_{(3)}}}}{2}$ and $\mathbf{\omega}_{2_{\mathbf{m}_{(2)}}}=\frac{\omega_{3_{\mathbf{m}_{(3)}}}}{2}$.
Suppose further $\mathbf{m}$ is even, i.e. $\mathbf{m}_{(3)}=2\mathbf{n}, \mathbf{n}=1,2,\cdots,\mathbf{M}_{(3)}$, \eqref{eq:53} can be written as
\begin{align}\label{eq:60}
 &\Phi_\sigma^{FMM}(\mathbf{x}_i-\mathbf{x}_j)\\
\nonumber &=
 \sum_{\mathbf{m}_{(3)}=\mathbf{2n}}^{\mathbf{2M}_{(3)}}\mathbf{\omega}_{3_{\mathbf{m}_{(3)}}}e^{i\mathbf{\xi}_{3_{\mathbf{m}}}\mathbf{r}_{ia_3}}
 \sum_{\mathbf{n}=1}^{\mathbf{M}_{(3)}}\frac{1}{2}\mathcal{P}_{\mathbf{m}_{(3)}}^T(\mathbf{\xi}_{3_{\mathbf{n}}})e^{i\mathbf{\xi}_{3_{\mathbf{n}}}\mathbf{r}_{a_3a_2}}
 \mathcal{C}(\mathbf{\xi}_{3_{\mathbf{n}}})e^{i\mathbf{\xi}_{3_{\mathbf{n}}}\mathbf{r}_{a_2b_2}}e^{i\mathbf{\xi}_{3_{\mathbf{n}}}\mathbf{r}_{b_2b_3}}\\
\nonumber &\sum_{\mathbf{m}_{(3)}=\mathbf{2n}}^{\mathbf{2M}_{(3)}}\mathcal{P}_{\mathbf{m}_{(3)}}(\mathbf{\xi}_{3_{\mathbf{n}}})e^{i\mathbf{\xi}_{3_{\mathbf{m}_{(3)}}}\mathbf{r}_{b_3j}}.
 \end{align}
As described above, according to the Weierstrass approximation theorem, the continuous function $e^{i\frac{\xi}{2}x}$ can be approximated by polynomials. 
In this paper, we consider Lagrange interpolation method because the local interpolation method is fast and has simple error analysis, i.e.,
given and $x\in [-a,a]$, for $\varepsilon>0$, there exists a $K>0$ such that
 \begin{equation}
  \max_{\xi\in[-\sigma,\sigma]}|e^{i\frac{\xi}{2}x}-\sum_{k=1}^K\mathcal{P}_{\xi_k}(\frac{\xi}{2})e^{i\xi_k x}|<\varepsilon,
 \end{equation}
 where the Lagrange interpolation operator
 \begin{equation}
 \mathcal{P}_{\xi_k}(\frac{\xi}{2})=\prod_{l=1,k\neq l}^K\dfrac{\frac{\xi}{2}-\xi_k}{\xi_l-\xi_k}.
 \end{equation}
The truncation errors for various values of $K$ in infinity norm corresponding to $a=1$ are summarized in Table \ref{tab:halfwaveerror}.
\begin{table}[H]
\centering
\begin{tabular}{ccccc}
\toprule
$K$                &$5$             &$6$            &$7$              &$8$\\

$\varepsilon$                                                                                                                                                                                                                                                                                   &$0.0830$        &$0.0211$       &$0.0048$         &$5.7199e-04$  \\
\midrule
$K$       &$9$           &$10$           &$11$            &$12$\\
 $\varepsilon$&$8.1828e-05$  &$1.3111e-05$   &$1.9350e-06$    &$1.5142e-07$\\ 
\bottomrule
\end{tabular}
\caption{Lagrange interpolation error}
\label{tab:halfwaveerror}
\end{table}
\section{Numerical Examples}
We tested the method using three different kernels that have different properties:
\begin{itemize}
 \item Inverse Multiquadric (IMQ), $\dfrac{1}{\sqrt{1+r^2}}$. Monotonically decaying global radial basis function.
 \item Multiquadric (MQ), $\sqrt{1+r^2}$. Monotonically increasing global radial basis function.
 \item Wendland's function, $(1-r,0)_{+}^3(3r+1)$. Compactly supported radial basis function.
\end{itemize}
In this paper, we use the mollification as the approximated to replace the original function. Figure 3 presents three different original functions, 
corresponding approximation functions and Fourier transform. We have used $[0,1]$ as the computing domain. For the compactly supported function, we scaled the compactly
supported radius so that its Fourier transform is not a constant. As expected frequencies of three functions decay and tend to zeros in finite intervals. This result verified
our approach is valid.
\begin{figure}%
\centering
   \subfloat[$\Phi(r)=\frac{1}{\sqrt{r^2+1}}$]{\includegraphics[width=0.32\textwidth]{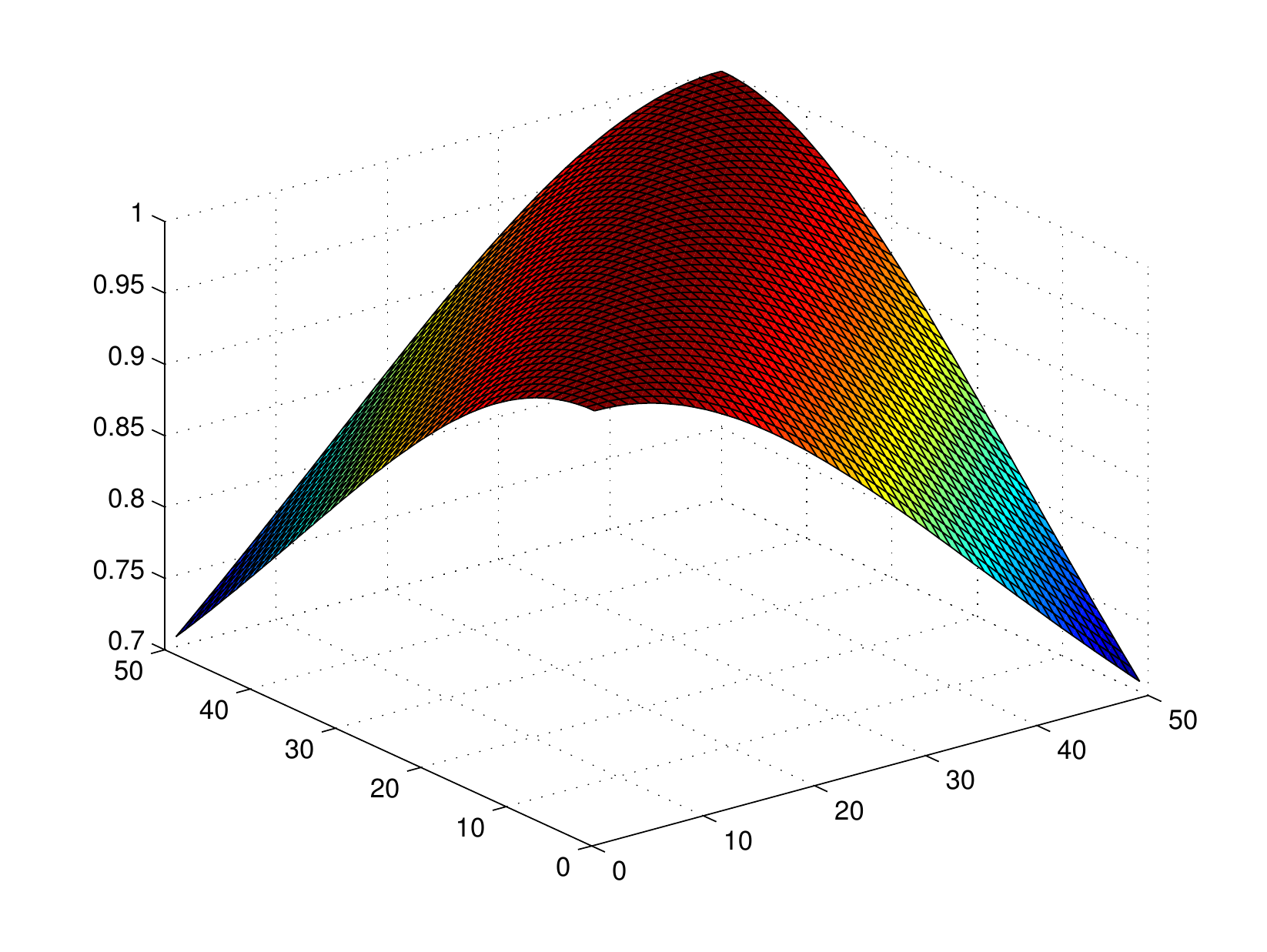}} 
   \subfloat[$\Phi_\sigma^{FMM}(r)$]{\includegraphics[width=0.32\textwidth]{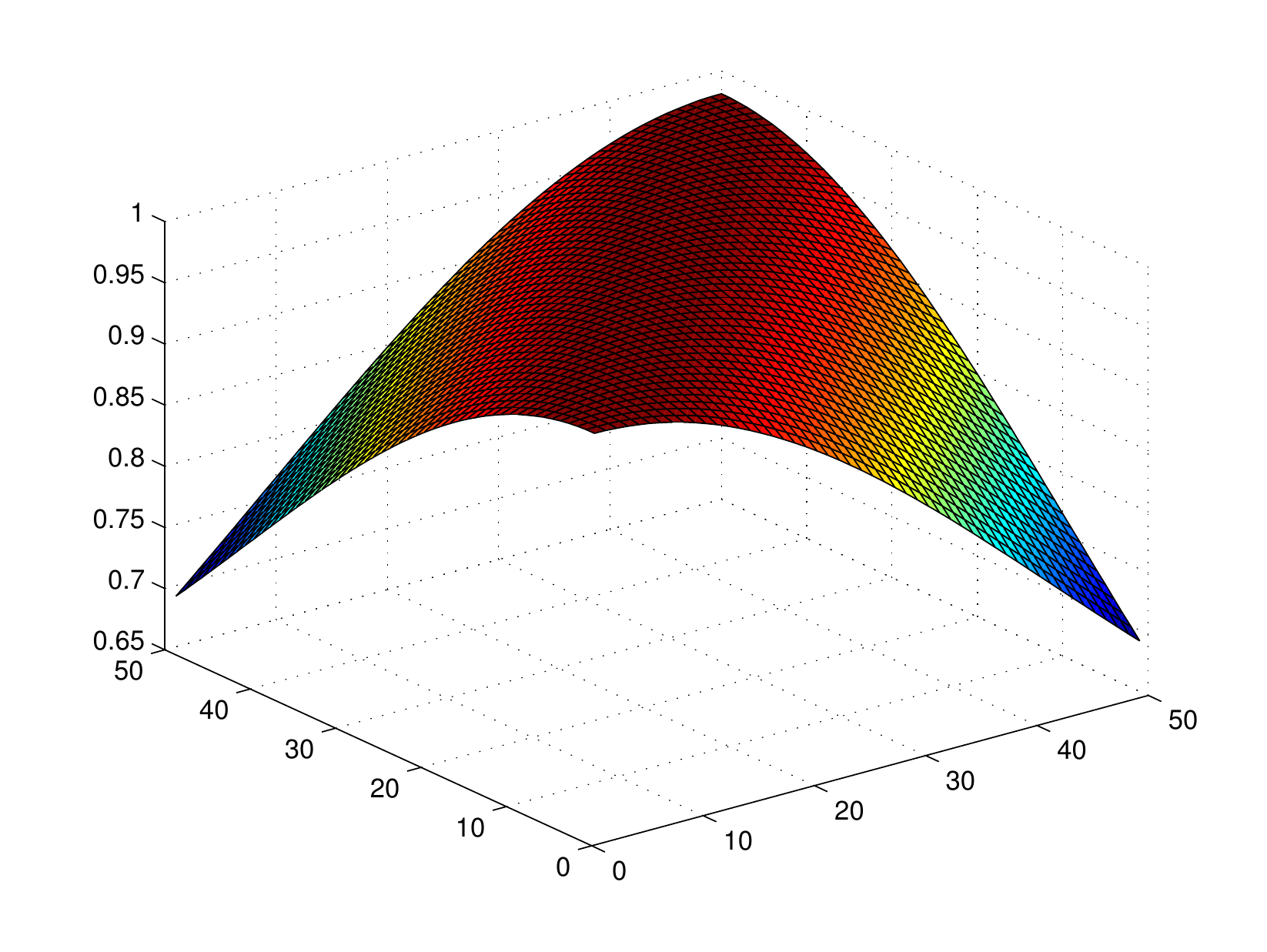}}
   \subfloat[$\widehat{\Phi}_\sigma^{FMM}$]{\includegraphics[width=0.32\textwidth]{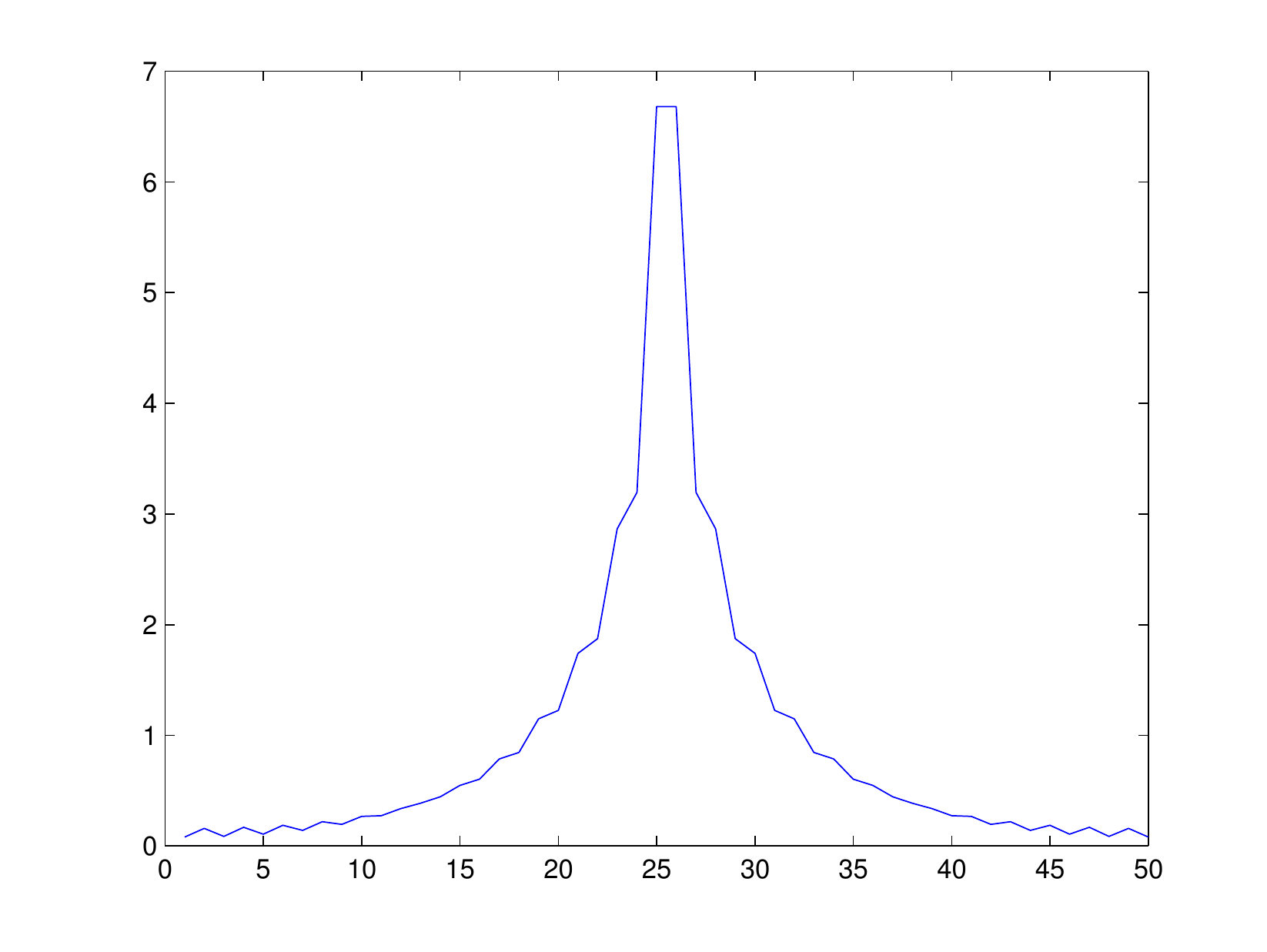}}

   \subfloat[$\Phi(r)=\sqrt{r^2+1}$]{\includegraphics[width=0.32\textwidth]{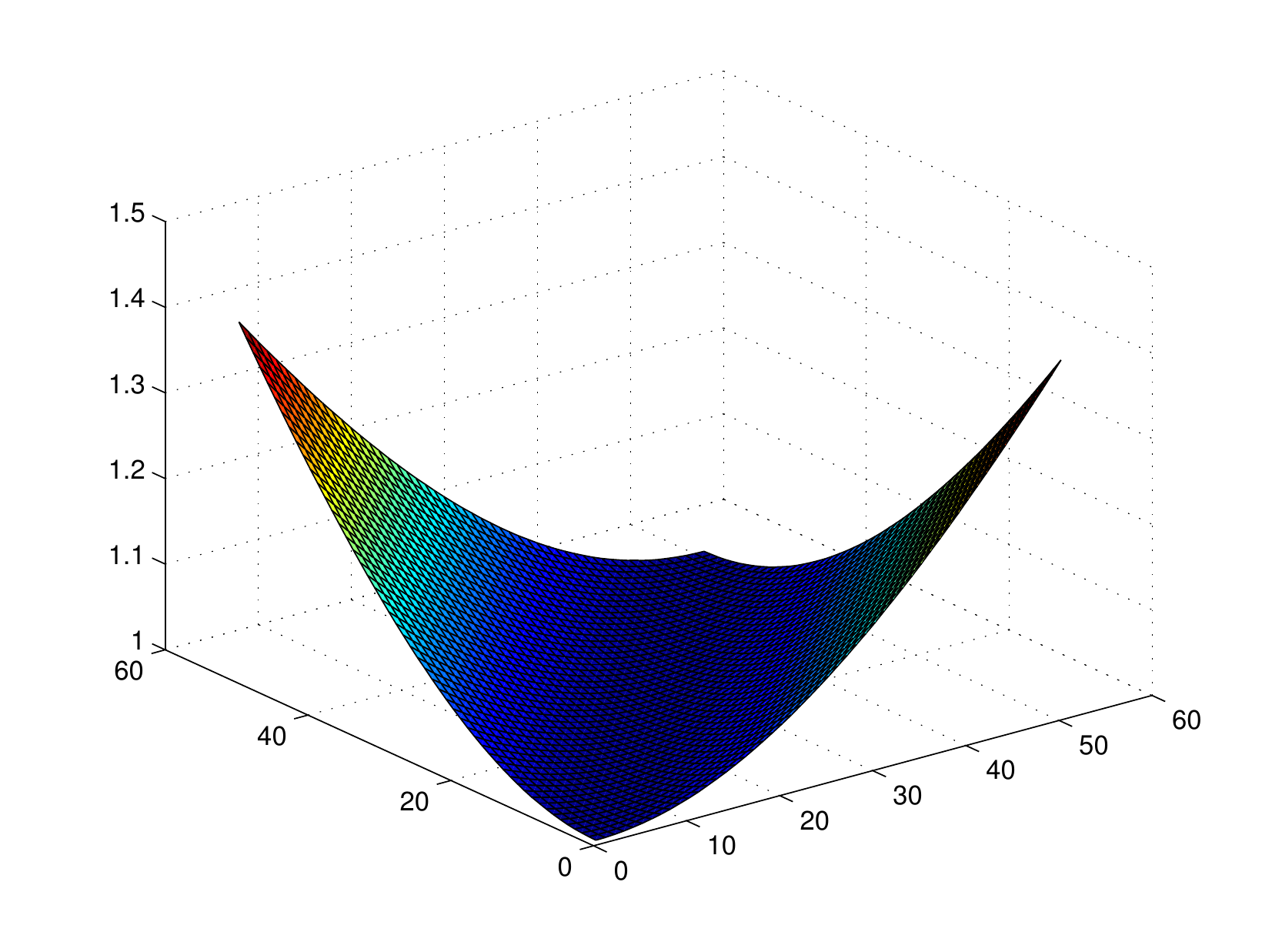}} 
   \subfloat[$\Phi_\sigma^{FMM}(r)$]{\includegraphics[width=0.32\textwidth]{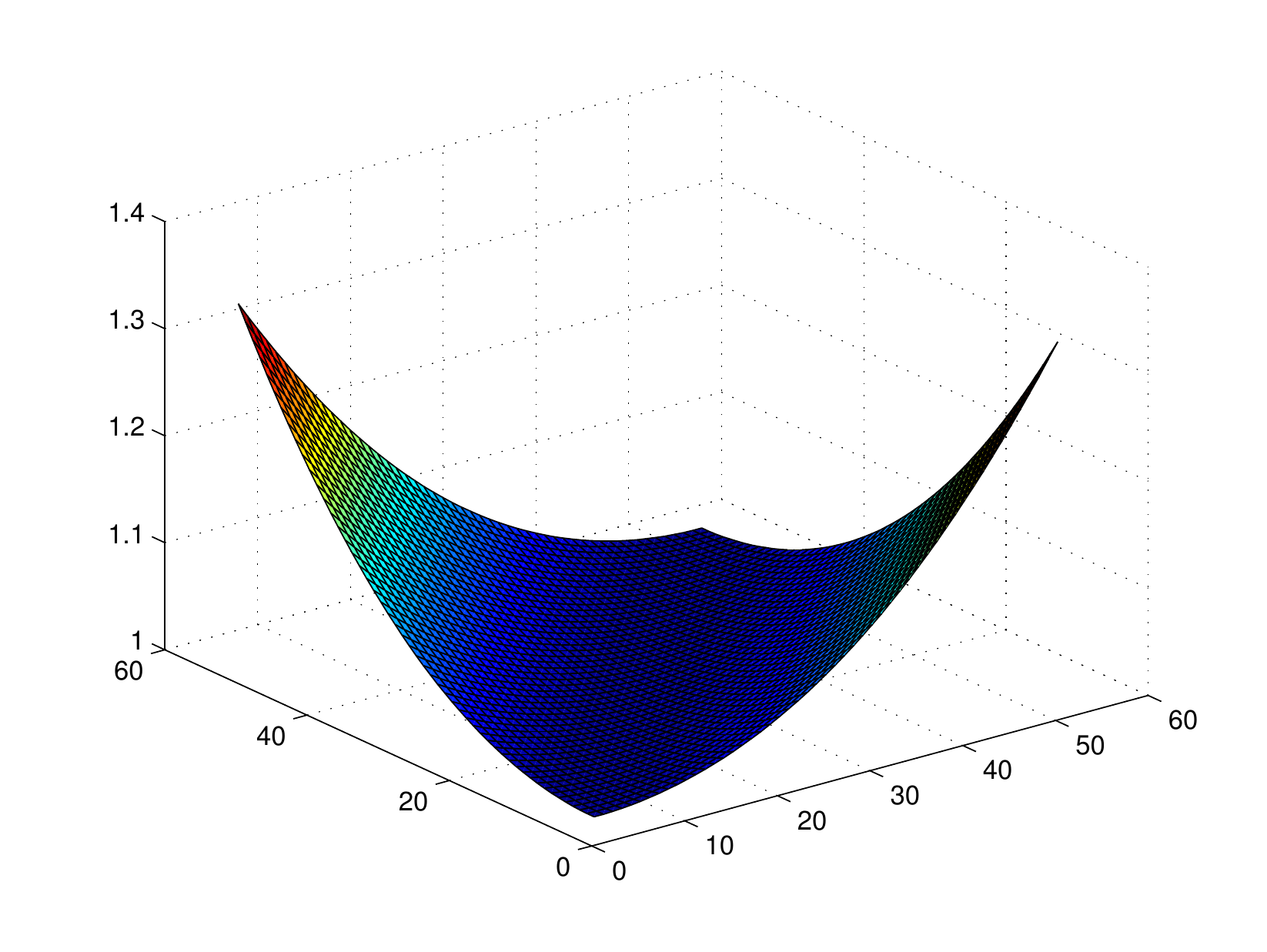}}
   \subfloat[$\widehat{\Phi}_\sigma^{FMM}$]{\includegraphics[width=0.32\textwidth]{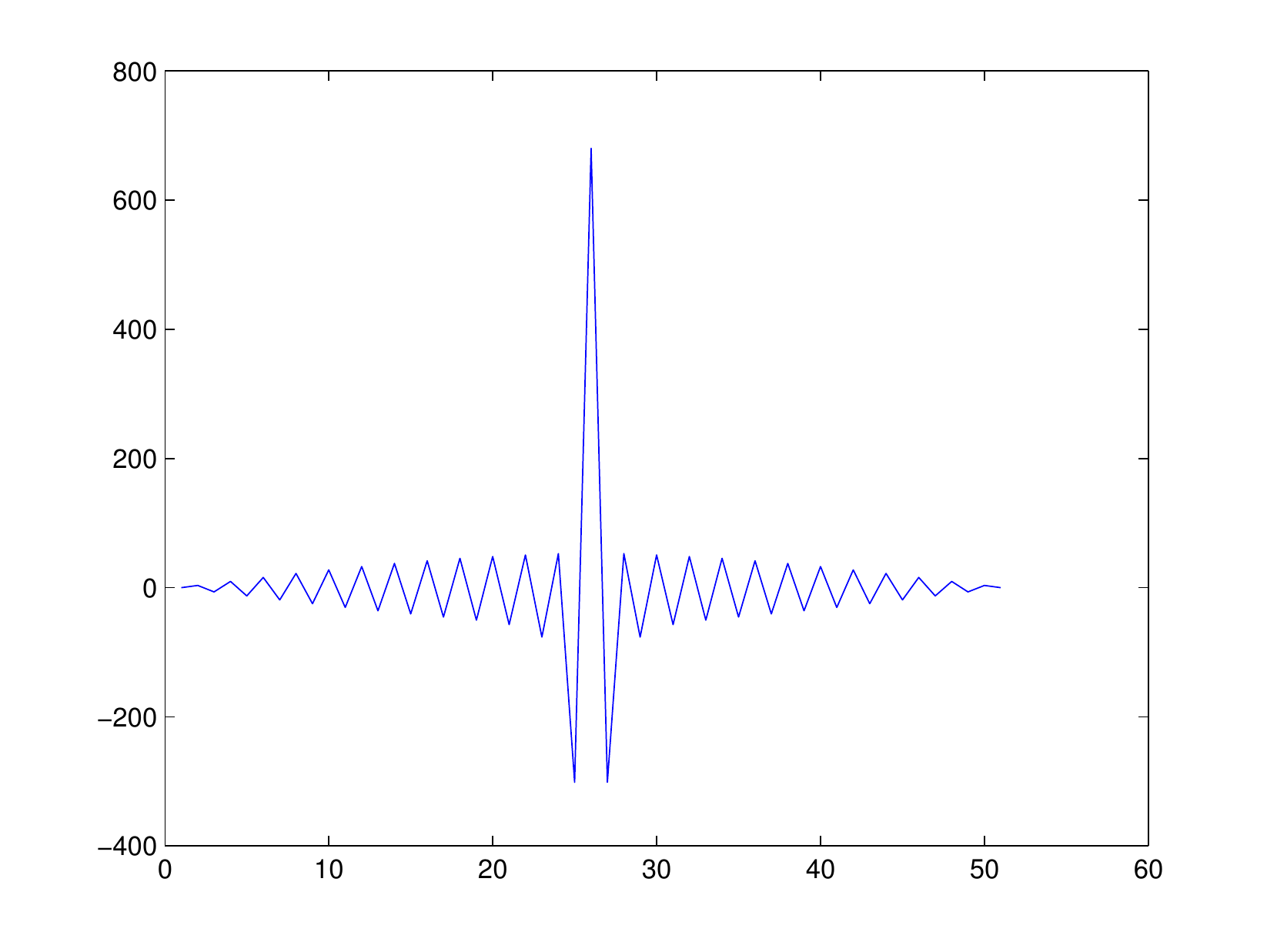}}

   \subfloat[$\Phi(r)=(1-\varepsilon r,0)_{+}^3(3\varepsilon r+1),\varepsilon=0.5$]{\includegraphics[width=0.32\textwidth]{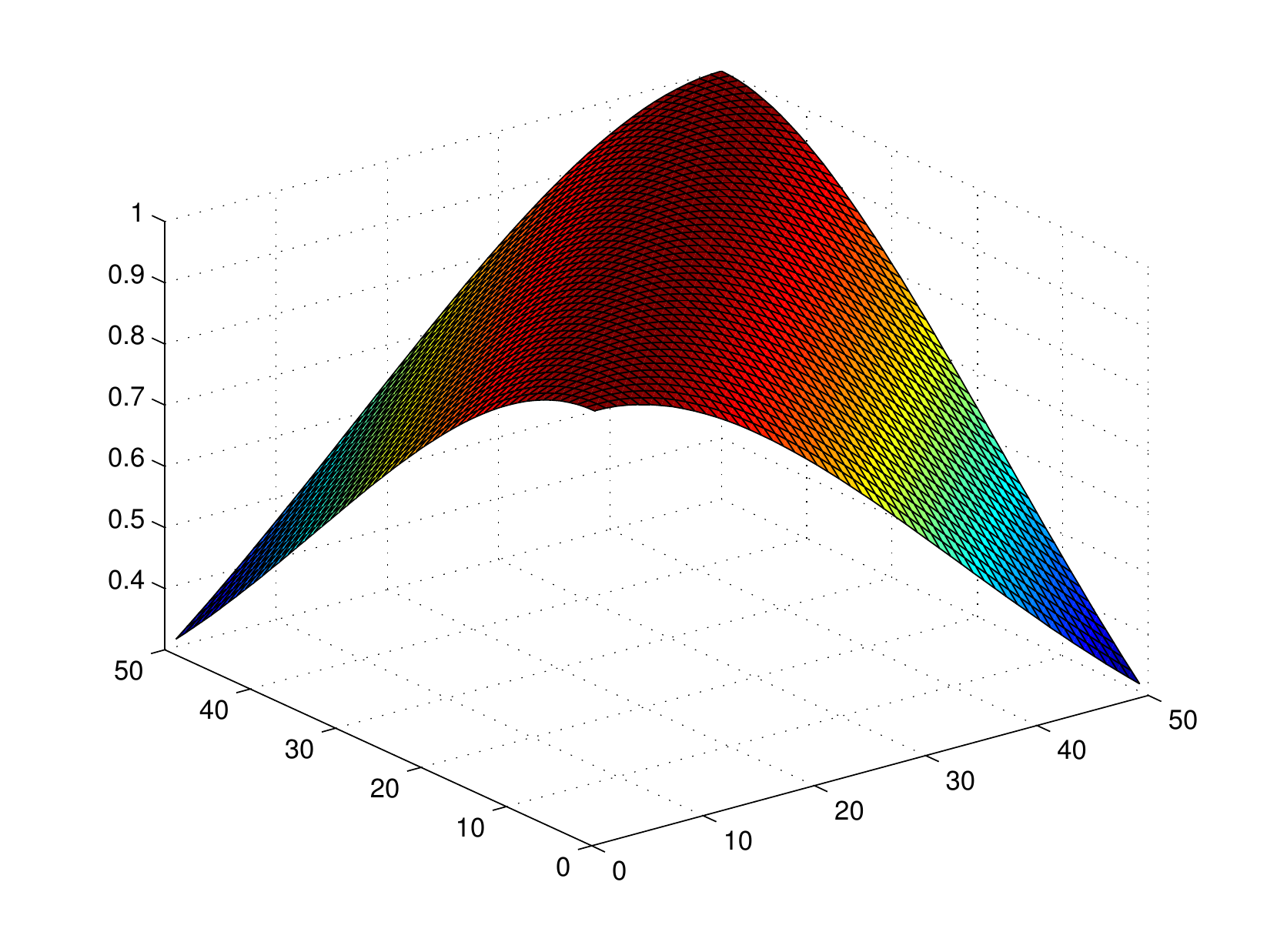}} 
   \subfloat[$\Phi_\sigma^{FMM}(r)$]{\includegraphics[width=0.32\textwidth]{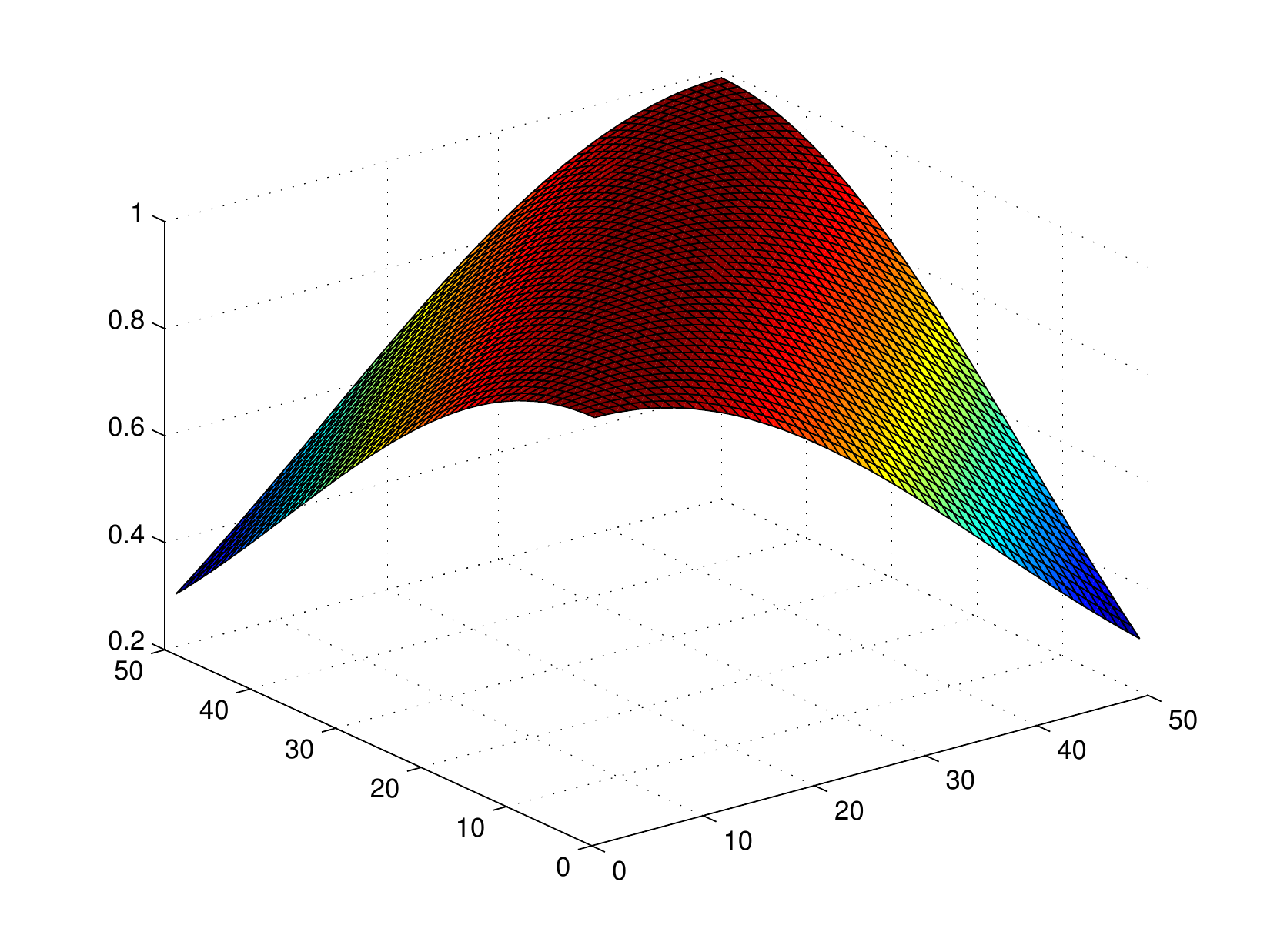}}
   \subfloat[$\widehat{\Phi}_\sigma^{FMM}$]{\includegraphics[width=0.32\textwidth]{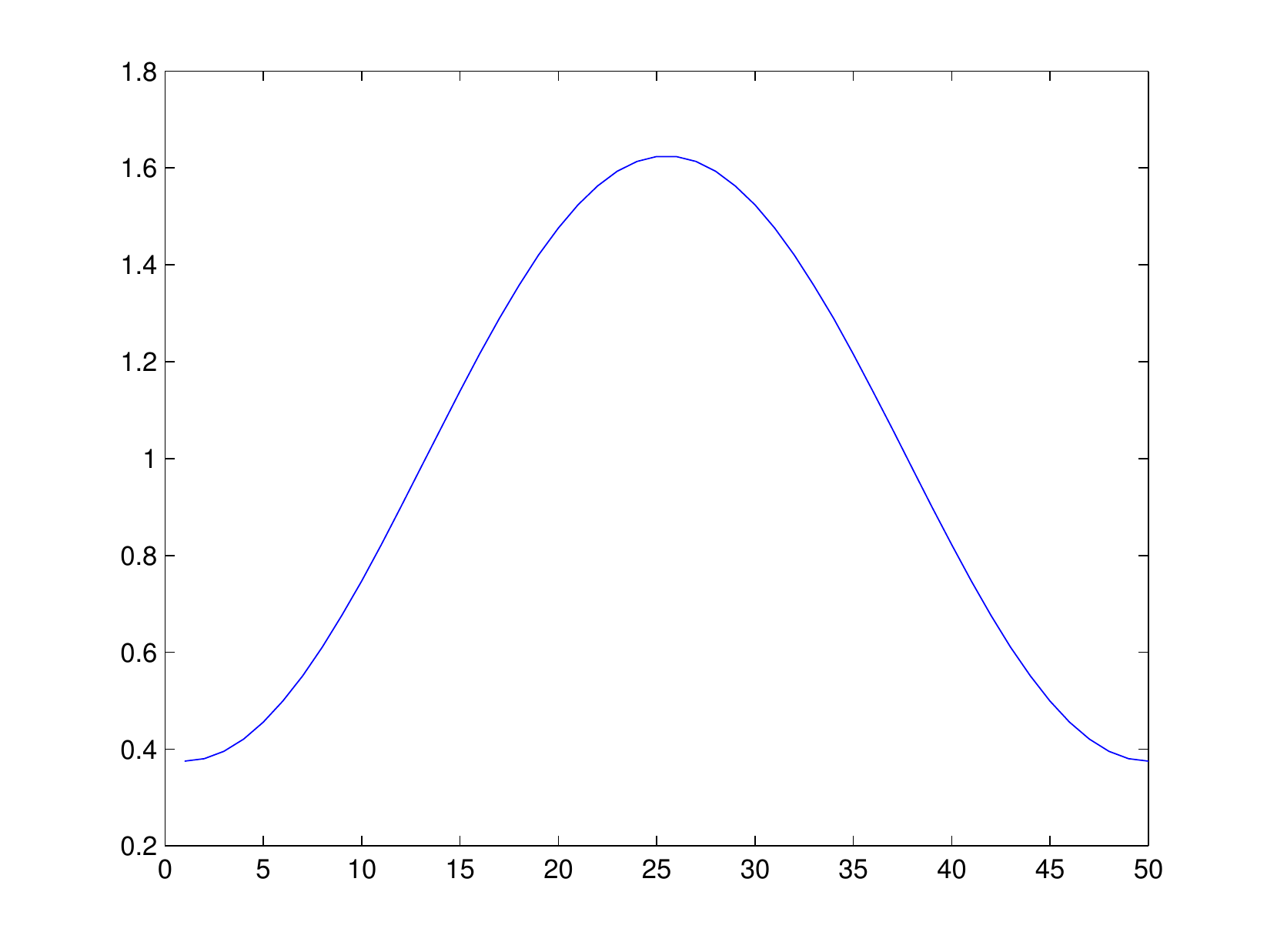}}

   \caption{Left: three radial basis functions. Middle: low-rank representations of three functions after introducing mollifiers. Right: Fourier transforms of low-rank representations.}%
   \label{fig:BLapp}%
\end{figure} 

Next we are interested in the convergence of the scheme in this paper. For simplicity we show only results corresponding to the one level FMM. In Figure 4, $R$ is the length
of the cluster. The FMM expansion is only applied to all clusters that are well separated and others are computed directly without any accelerated scheme.

\begin{figure}%
\centering
   \subfloat[$\frac{1}{\sqrt{r^2+1}}$]{\includegraphics[width=0.32\textwidth]{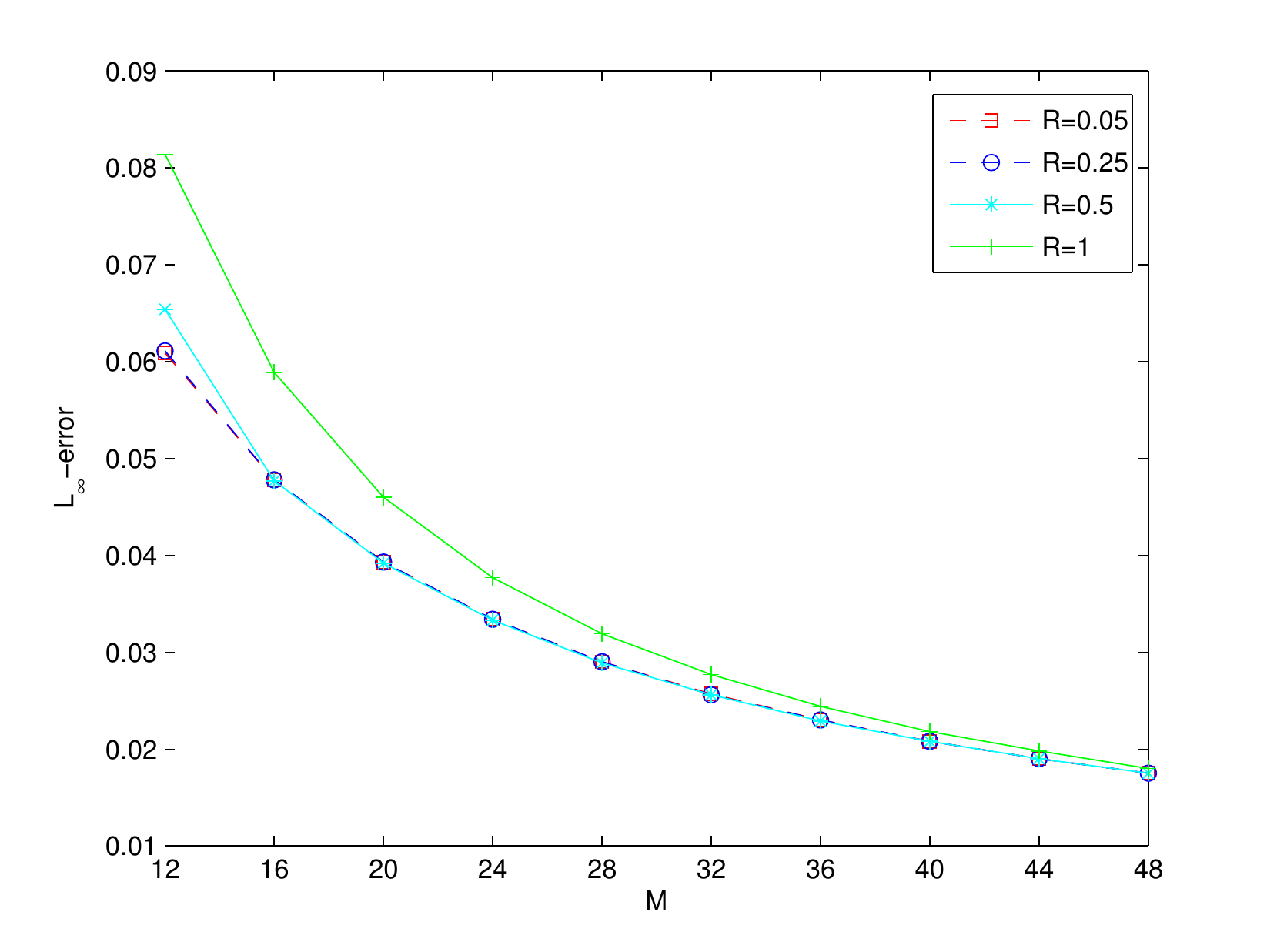}} 
   \subfloat[$\sqrt{r^2+1}$]{\includegraphics[width=0.32\textwidth]{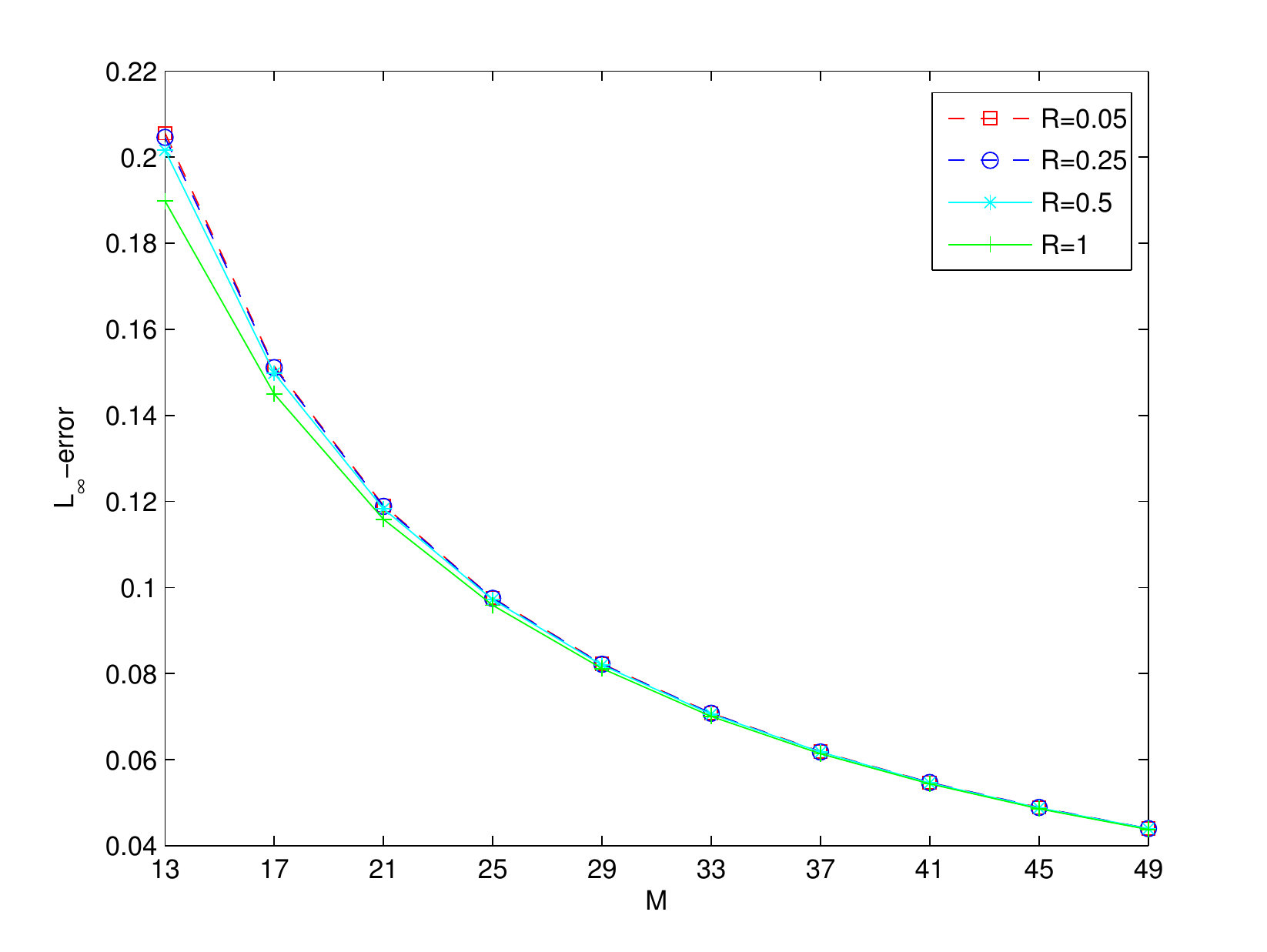}}
   \subfloat[$(1-\varepsilon r,0)_{+}^3(3\varepsilon r+1),\varepsilon=0.5$]{\includegraphics[width=0.32\textwidth]{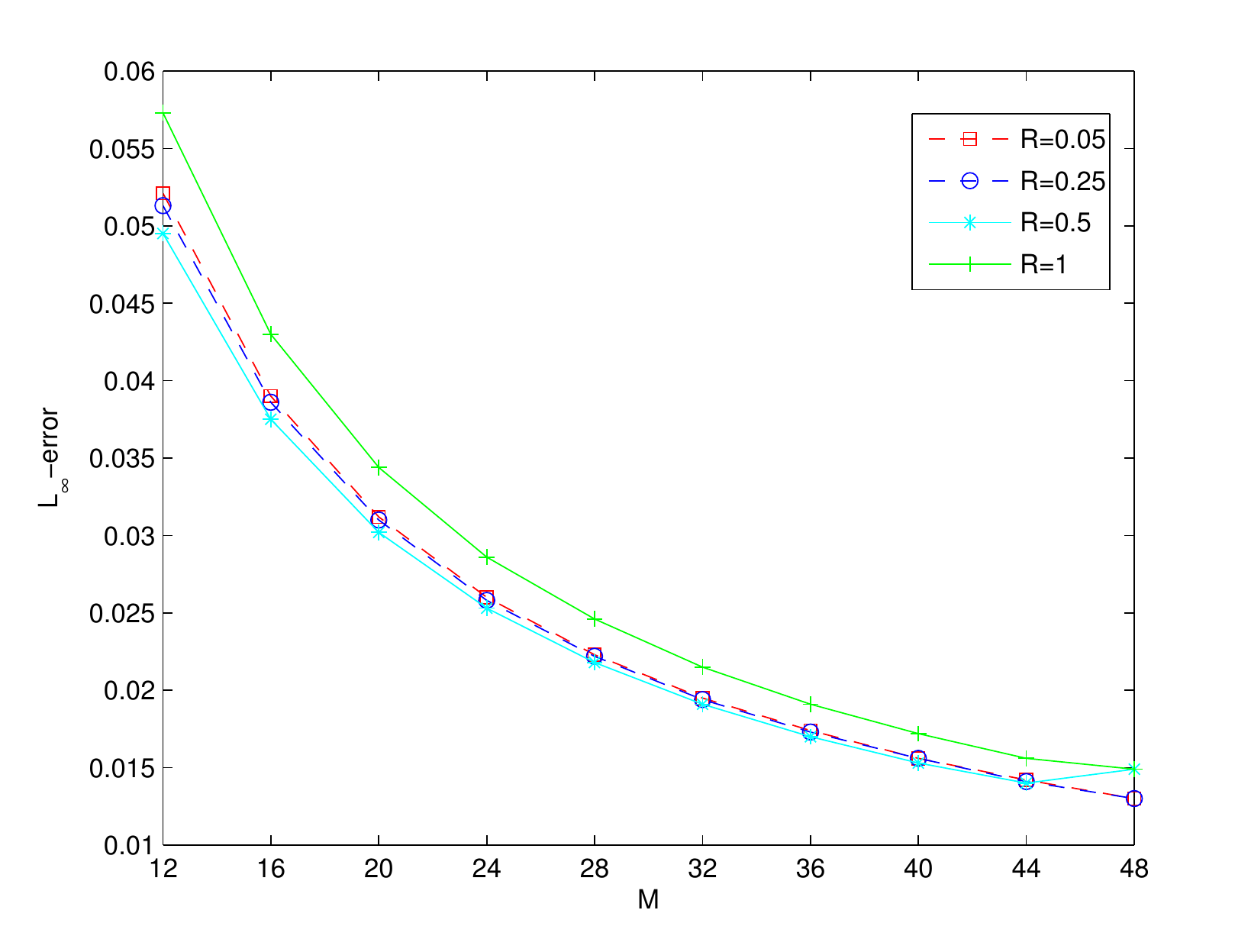}}

   \caption{$L_{\infty}$-errors for three functions with various values $R$ and truncation numbers and the error of the algorithm dependends on the truncation number $M$.}%
   \label{fig:BLapperr}%
\end{figure} 

\section{Conclusion}
We gave a framework to improve smooth kernels so that the range of applicability of the fast multipole method can be extended. This 
approach speeds up the traditional FMMs because the M2L translation operators are diagonal. The algorithm shares similarities with 
high-frequency fast multipole methods.
\par
The approach presented here works independently of the kernel as long as it is smooth. According to a fundamental principle of the Fourier transform: smooth functions have Fourier transforms that decay rapidly to zeros at infinity (no energy at highest frequency). Based on this principle, we introduced a suitable mollifier to improve the smoothness of radial basis functions and radial basis functions are then replaced by these smoother kernels for obtaining low-rank representations easily. These smoother kernels share many properties with radial basis functions.

We also gave Sobolev-type error estimate and stability analysis for interpolation by smoother kernels. Our numerical results have shown that the proposed method 
is convergent.

\bibliographystyle{siam}
\bibliography{mydata,data}
\end{document}